\newtheorem{thm}{Theorem}
\newtheorem{defn}[thm]{Definition}
\newtheorem{lem}[thm]{Lemma}
\begin{document}

\title{Numerically erasure-robust frames}

\author[Fickus]{Matthew Fickus}
\address[Fickus]{Department of Mathematics and Statistics, Air Force Institute of Technology, Wright-Patterson Air Force Base, OH 45433, USA; matthew.fickus@afit.edu}

\author[Mixon]{Dustin G.~Mixon}
\address[Mixon]{Program in Applied and Computational Mathematics, Princeton University, Princeton, New Jersey 08544, USA; E-mail: dmixon@princeton.edu}

\begin{abstract}
Given a channel with additive noise and adversarial erasures, the task is to design a frame that allows for stable signal reconstruction from transmitted frame coefficients.
To meet these specifications, we introduce numerically erasure-robust frames.
We first consider a variety of constructions, including random frames, equiangular tight frames and group frames.
Later, we show that arbitrarily large erasure rates necessarily induce numerical instability in signal reconstruction.
We conclude with a few observations, including some implications for maximal equiangular tight frames and sparse frames.
\end{abstract}

\keywords{frames, erasures, well-conditioned}

\subjclass[2000]{42C15, 15A12}

\thanks{The authors thank the anonymous referee for very helpful comments and suggestions.
MF was supported by NSF Grant No.~DMS-1042701 and AFOSR Grant Nos.~F1ATA01103J001 and F1ATA00183G003, and DGM was supported by the A.B.~Krongard Fellowship.
The views expressed in this article are those of the authors and do not reflect the official policy or position of the United States Air Force, Department of Defense, or the U.S. Government.}

\maketitle

\section{Introduction}

Modern communication networks are rooted in both information theory and algebraic coding theory.
In these contexts, after deciding on a probabilistic noise model for a given communication channel, one chooses an appropriate error-correcting code to achieve reliable communication with a maximal information rate.
For linear codes in particular, encoding and decoding reduce to problems in linear algebra over finite fields.
Beginning with Goyal~et~al.~\cite{GoyalKK:01}, finite frame theorists have studied the generalizations of these problems to real and complex vector spaces.
This generalization allows one to use certain mathematical tools, such as matrix norms and condition numbers, which are not well-defined in the finite-field setting.

This paper is concerned with a channel characterized by additive noise and adversarial erasures.
We encode a signal $x\in\mathbb{C}^M$ using inner products $\langle x,f_n\rangle$ with members of a spanning sequence of vectors $\{f_n\}_{n=1}^N\subseteq\mathbb{C}^M$; such a sequence is called a \textit{frame}.
In transmitting these inner products, we expect additive noise due to various phenomena such as atmospheric interactions or round-off error.
If these were the only sources of noise, then it would be reasonable to reconstruct the original signal by applying the Moore-Penrose pseudoinverse.  To be precise, letting $F$ denote the $M\times N$ matrix whose columns are the $f_n$'s, we transmit $y=F^*x$.
At the receiver, an estimate of $x$ is then found by computing
\begin{equation*}
\hat{x}
=\big((FF^*)^{-1}F\big)(y+e)
=x+(FF^*)^{-1}Fe,
\end{equation*}
where $e$ is additive noise.
Assuming the channel has a ``signal-to-noise ratio" of $R=\|y\|/\|e\|$, we can estimate how the size of the estimate error $(FF^*)^{-1}Fe$ compares with the size of the original signal $x$.
Indeed, $\|(FF^*)^{-1}Fe\|\leq \frac{C}{R}\|x\|$, where
\begin{equation*}
C
:=\!\!\!\sup_{\substack{x\in\mathbb{C}^M\setminus\{0\}\\e\in\mathbb{C}^{N}\setminus\{0\}}}\!\!\!R\cdot\frac{\|(FF^*)^{-1}Fe\|}{\|x\|}
=\!\!\!\sup_{\substack{x\in\mathbb{C}^M\setminus\{0\}\\e\in\mathbb{C}^{N}\setminus\{0\}}}\!\!\!\frac{\|F^*x\|}{\|x\|}\cdot\frac{\|(FF^*)^{-1}Fe\|}{\|e\|}
=\|F\|_2\|(FF^*)^{-1}F\|_2.
\end{equation*}
Here, $C$ is the \textit{condition number} of $F$, denoted $\mathrm{Cond}(F)$, which is equal to the ratio of the greatest singular value of $F$ to its smallest one.
From this perspective, the best possible frames are those with $\mathrm{Cond}(F)=1$, a fact which occurs precisely when $FF^*=A\mathrm{I}_M$ for some $A>0$; such $F$'s are called \textit{tight frames}.

We consider channels that, in addition to additive noise, suffer from \textit{erasures}.
To be precise, the transmitted signal is a sequence of inner products: $F^*x=\{\langle x, f_n\rangle\}_{n=1}^{N}$.
Like~\cite{GoyalKK:01}, we consider channels which completely delete some of these inner products and add noise to the remaining ones.
However, whereas~\cite{GoyalKK:01} focuses on average reconstruction performance, we instead follow~\cite{CasazzaK:03} and~\cite{HolmesP:laa04} by focusing on worst-case reconstruction performance.
In particular, by considering worst-case performance, we design frames which are robust against the erasure of \textit{any} fixed number of inner products.
Such frames could be particularly useful in situations where an adversary is actively deleting our most useful frame coefficients, i.e., active jamming.
We say that such frames are robust against \textit{adversarial} erasures.

To design such frames, we first acknowledge that we cannot reconstruct the $M$-dimensional signal $x$ without at least $M$ inner products.
As such, we must impose some constraint on the adversary.
For the highly constrained adversary, Casazza and Kova\v{c}evi\'{c}~\cite{CasazzaK:03} show that tight frames of unit-norm vectors, called \textit{unit norm tight frames}, are optimally robust against one erasure.
Soon thereafter, Holmes and Paulsen~\cite{HolmesP:laa04} showed that \textit{equiangular tight frames}---explicitly defined in the next section---are optimal for two erasures.
To combat the highly destructive adversary, P\"{u}schel and Kova\v{c}evi\'{c}~\cite{PuschelK:dcc05} propose frames which are \textit{maximally robust} to erasures in the sense that the original signal can be recovered from any $M$ of the $N$ inner products.
Other constructions of such \textit{maximally robust frames} are given in~\cite{AlexeevCM:arxiv11}, where they are dubbed \textit{full spark frames}.
It remains unclear whether the deletion of any $N-M$ frame coefficients will allow for numerically stable reconstruction; this is an important distinction between invertible submatrices---the subject of~\cite{AlexeevCM:arxiv11,PuschelK:dcc05}---and well-conditioned submatrices, which is our focus here.

To be clear, in this paper we consider the case where the adversary is only capable of removing a proportion $p$ of the $N$ transmitted inner products.
Then the remaining $(1-p)N$ inner products correspond to a subcollection of $(1-p)N$ columns of $F$, which we require to be well-conditioned for our reconstruction to properly combat the additive noise.
Since erasures occur according to the will of an adversary, as opposed to a random process, we must ensure that \textit{every} subcollection of $(1-p)N$ columns of $F$ is well-conditioned.
This leads to the following definition:

\begin{defn}
Given $p\in[0,1]$ and $C\geq1$, an $M\times N$ frame $F$ is a $(p,C)$\textit{-numerically erasure-robust frame (NERF)} if for every $\mathcal{K}\subseteq\{1,\ldots,N\}$ of size $K:=(1-p)N$, the corresponding $M\times K$ submatrix $F_\mathcal{K}$ has condition number $\mathrm{Cond}(F_\mathcal{K})\leq C$.
\end{defn}

The purpose of this paper is to make the first strides in studying NERFs.
In the following section, we use a variety of techniques to form different NERF constructions.
Taking inspiration from matrix design problems in compressed sensing, we first investigate frames whose entries are independent Gaussian random variables.
Next, we consider equiangular tight frames, with which we get stronger results at the price of higher redundancy in the frame.
Later, we show how the symmetry of group frames makes them naturally amenable to NERF analysis.
In Section 3, we report a result on the fundamental limits of NERFs: that NERFs cannot stably support erasure rates $p$ which are arbitrarily close to $1$.
Finally, we conclude with a few interesting observations in Section 4.

\section{Constructions}

\subsection{Random frames}

The reader may have noticed some similarity between the definition of numerically erasure-robust frames and a matrix property which comes from the compressed sensing literature: the \textit{restricted isometry property (RIP)}.
To be clear, an $M\times N$ matrix $F$ is RIP if it acts as a near-isometry on sufficiently sparse vectors, that is, $\|Fx\|\approx\|x\|$ for all vectors $x$ with sufficiently few nonzero entries~\cite{Candes:08}.
In other words, submatrices $F_\mathcal{K}$ composed of sufficiently few columns from $F$ have $F_\mathcal{K}^*F_\mathcal{K}^{}$ particularly close to the identity matrix, meaning $F_\mathcal{K}^*F_\mathcal{K}^{}$ is particularly well-conditioned.
The key difference between NERFs and RIP matrices is that well-conditioned NERF submatrices $F_\mathcal{K}$ have $K:=|\mathcal{K}|\geq M$ columns, whereas $F_\mathcal{K}$ has fewer than $M$ columns in the RIP case.
Regardless, in constructing NERFs, we can exploit some intuition from the construction of RIP matrices.
In particular, the RIP matrices which support the largest sparsity levels to date arise from random processes.
As an example, one may draw the entries independently from a Gaussian distribution of mean zero and variance $\frac{1}{M}$; this was originally established in Lemma~3.1 of~\cite{CandesT:05}.
What follows is the analogous NERF result:

\begin{thm}
\label{thm.random construction}
Fix $\varepsilon>0$ and pick an $M\times N$ frame $F$ by drawing each entry independently from a standard normal distribution.
Then $F$ is a $(p,C)$-numerically erasure-robust frame with overwhelming probability provided
\begin{equation}
\label{eq.whp assumption}
\sqrt{\frac{M}{N}}
\leq\frac{C-1}{C+1}\sqrt{1-p}-\sqrt{\varepsilon+2p(1-\log p)}.
\end{equation}
\end{thm}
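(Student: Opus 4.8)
The plan is to reduce the NERF condition to a single-submatrix singular-value estimate and then control all $\binom{N}{K}$ submatrices at once by a union bound. Recall that $\mathrm{Cond}(F_\mathcal{K})$ is the ratio of the largest singular value $\sigma_{\max}(F_\mathcal{K})$ to the smallest $\sigma_{\min}(F_\mathcal{K})$. Since the factor $\tfrac{C-1}{C+1}$ is strictly less than $1$, hypothesis~\eqref{eq.whp assumption} forces $K=(1-p)N>M$, so each $M\times K$ submatrix has full row rank almost surely and $\sigma_{\min}(F_\mathcal{K})$ denotes its $M$th (smallest) singular value. Thus it suffices to establish, with overwhelming probability, simultaneous upper and lower bounds on the extreme singular values of \emph{every} such submatrix.

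For a fixed $\mathcal{K}$, the entries of $F_\mathcal{K}$ are independent standard normals, so I would invoke the Davidson--Szarek estimates. The maps $F_\mathcal{K}\mapsto\sigma_{\max}(F_\mathcal{K})$ and $F_\mathcal{K}\mapsto\sigma_{\min}(F_\mathcal{K})$ are $1$-Lipschitz in the matrix entries, and their expectations satisfy $\mathbb{E}[\sigma_{\max}]\leq\sqrt{K}+\sqrt{M}$ and $\mathbb{E}[\sigma_{\min}]\geq\sqrt{K}-\sqrt{M}$; Gaussian concentration then gives, for every $t>0$,
\begin{equation*}
\mathbb{P}\big(\sigma_{\max}(F_\mathcal{K})>\sqrt{K}+\sqrt{M}+t\big)\leq e^{-t^2/2},\qquad\mathbb{P}\big(\sigma_{\min}(F_\mathcal{K})<\sqrt{K}-\sqrt{M}-t\big)\leq e^{-t^2/2}.
\end{equation*}
Off the union of these two events we have $\mathrm{Cond}(F_\mathcal{K})\leq\frac{\sqrt{K}+\sqrt{M}+t}{\sqrt{K}-\sqrt{M}-t}$, and an elementary rearrangement shows this ratio is at most $C$ exactly when $\sqrt{M}+t\leq\frac{C-1}{C+1}\sqrt{K}$ (the denominator being automatically positive since $\tfrac{C-1}{C+1}<1$).

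I would then union bound over all $\binom{N}{K}$ choices of $\mathcal{K}$. Writing $K=(1-p)N$ and using $\binom{N}{(1-p)N}=\binom{N}{pN}\leq(e/p)^{pN}=\exp\big(pN(1-\log p)\big)$, the probability that some submatrix violates the singular-value bounds is at most
\begin{equation*}
2\binom{N}{K}e^{-t^2/2}\leq 2\exp\Big(pN(1-\log p)-\tfrac{t^2}{2}\Big).
\end{equation*}
Choosing $t=\sqrt{N\big(\varepsilon+2p(1-\log p)\big)}$ makes the exponent equal to $-\tfrac{1}{2}N\varepsilon$, so the failure probability is at most $2e^{-N\varepsilon/2}$, which decays to zero exponentially in $N$---this is the precise sense of ``overwhelming probability.'' Finally, substituting this same $t$ into the per-matrix condition $\sqrt{M}+t\leq\frac{C-1}{C+1}\sqrt{K}$ and dividing through by $\sqrt{N}$ reproduces hypothesis~\eqref{eq.whp assumption} verbatim, which closes the argument.

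The main obstacle is the balancing act inside the union bound: the combinatorial factor $\binom{N}{K}$ grows exponentially in $N$, so the per-submatrix deviation probability $e^{-t^2/2}$ must be forced down by taking $t$ of order $\sqrt{N}$. This is exactly why the hypothesis subtracts the entropy-type penalty $\sqrt{\varepsilon+2p(1-\log p)}$ from the tight-frame budget $\frac{C-1}{C+1}\sqrt{1-p}$, the summand $2p(1-\log p)$ paying for the binomial count and the $\varepsilon$ buying the exponential decay. The only genuinely external input is the Davidson--Szarek estimate; everything else is bookkeeping to align the chosen $t$ with the stated inequality.
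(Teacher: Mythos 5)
Your proof is correct and takes essentially the same approach as the paper's: the Davidson--Szarek singular-value estimates for a fixed submatrix, a union bound over all $\binom{N}{K}$ column subsets with the same entropy bound $\exp(pN(1-\log p))$, and a choice of $t$ of order $\sqrt{N}$. The only cosmetic difference is that you fix $t$ so the exponent equals $-N\varepsilon/2$ and then verify the condition-number bound from \eqref{eq.whp assumption}, whereas the paper fixes $t$ so the condition-number bound equals $C$ exactly and then verifies the exponent; these are the same computation read in opposite directions.
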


Note that \eqref{eq.whp assumption} requires its right-hand side to be positive, which in turn implies
\begin{equation*}
\sqrt{1-p}-\sqrt{2p(1-\log p)}>0.
\end{equation*}
This occurs whenever $p\leq0.1460$.
That is, the random construction in Theorem~\ref{thm.random construction} is numerically robust to erasure rates of up to almost 15\%.
However, approaching a 15\% erasure rate while satisfying \eqref{eq.whp assumption} will admittedly cost a large worst-case condition number $C$ along with high redundnacy $\frac{N}{M}$ in the frame.
Still, Theorem~\ref{thm.random construction} provides a useful guarantee.
For example, a Gaussian matrix of redundancy $\frac{N}{M}=5$ will, with overwhelming probability, be robust to 1\% erasures with a worst-case condition number of 10.
We proceed with the proof:

\begin{proof}[Proof of Theorem~\ref{thm.random construction}]
Pick $\mathcal{K}\subseteq\{1,\ldots,N\}$ of size $K=(1-p)N$.
Note the assumption~\eqref{eq.whp assumption} implies that $\frac MN\leq 1-p$ and so $K=(1-p)N\geq M$.
As such, Theorem II.13 of~\cite{DavidsonS:01} gives bounds on the singular values of the random ``tall" $K\times M$ matrix  $F_\mathcal{K}^*$:
\begin{equation*}
\mathrm{Pr}\big[\sqrt{K}-\sqrt{M}-t\leq\sigma_\mathrm{min}(F_\mathcal{K}^*)\leq\sigma_\mathrm{max}(F_\mathcal{K}^*)\leq\sqrt{K}+\sqrt{M}+t\big]
\geq 1-2\mathrm{e}^{-t^2/2} \qquad \forall t\geq0.
\end{equation*}
This probabilistic bound on the extreme singular values implies
\begin{equation*}
\mathrm{Pr}\bigg[\mathrm{Cond}(F_\mathcal{K})\leq\frac{\sqrt{K}+\sqrt{M}+t}{\sqrt{K}-\sqrt{M}-t}\bigg]
\geq 1-2\mathrm{e}^{-t^2/2}\qquad\forall t\geq0.
\end{equation*}
Taking a union bound over all $\binom{N}{K}=\binom{N}{N-K}\leq(\frac{\mathrm{e}N}{N-K})^{N-K}$ choices for $\mathcal{K}$ gives
\begin{align}
\nonumber
\mathrm{Pr}\bigg[\exists\mathcal{K} \mbox{ s.t. } \mathrm{Cond}(F_\mathcal{K})>\frac{\sqrt{K}+\sqrt{M}+t}{\sqrt{K}-\sqrt{M}-t}\bigg]
&\leq \binom{N}{N-K}2e^{-t^2/2}\\
\nonumber
&\leq 2\exp\bigg(-\frac{t^2}{2}+(N-K)\log\frac{\mathrm{e}N}{N-K}\bigg)\\
\label{eq.high prob}
&=2\exp\bigg(-\frac{t^2}{2}+Np\log\frac{\mathrm{e}}{p}\bigg)\qquad\forall t\geq0.
\end{align}
Now pick $t$ such that \smash{$C=\frac{\sqrt{K}+\sqrt{M}+t}{\sqrt{K}-\sqrt{M}-t}$}, namely $t=\sqrt{N}(\frac{C-1}{C+1}\sqrt{1-p}-\sqrt{\frac MN})$.  
Note that~\eqref{eq.whp assumption} implies $t\geq0$, and so we may substitute it into \eqref{eq.high prob} and simplify the result:
\begin{align*}
\mathrm{Pr}\big[\exists\mathcal{K} \mbox{ s.t. } \mathrm{Cond}(F_\mathcal{K})>C\big]
&\leq 2\exp\Bigg[-\frac N2\Bigg(\bigg(\frac{C-1}{C+1}\sqrt{1-p}-\sqrt{\frac MN}\bigg)^2-2p(1-\log p)\Bigg)\Bigg]\\
&\leq 2\mathrm{e}^{-N\varepsilon/2}.
\end{align*}
Thus, the probability of $F$ not being a $(p,C)$-NERF is $\mathrm{O}(N^{-\alpha})$ for every fixed $\alpha$, meaning $F$ is a $(p,C)$-NERF with overwhelming probability.
\end{proof}

\subsection{Equiangular tight frames}

The previous subsection constructed a random family of numerically erasure-robust frames by following intuition from known constructions of matrices with the restricted isometry property.
Indeed, state-of-the-art RIP matrices are built according to random processes, while deterministic constructions have found less success~\cite{BandeiraFMW:arxiv12}.
In this subsection, the analogy between RIP matrices and NERFs will break down, as we will construct deterministic NERFs which outperform the random counterparts with much larger erasure rates, albeit at the price of high redundancy.

In~\cite{HolmesP:laa04}, Holmes and Paulsen show that frames of pairwise dissimilar unit-norm vectors are robust to two erasures.
This dissimilarity is measured in terms of worst-case coherence, which is known to satisfy the Welch bound:

\begin{thm}[Welch bound~\cite{Welch:74}]
Every $M\times N$ frame $\{f_n\}_{n=1}^N$ of unit-norm vectors has worst-case coherence
\begin{equation*}
\max_{\substack{n,n'\in\{1,\ldots,N\}\\n\neq n'}}|\langle f_n,f_{n'}\rangle|\geq\sqrt{\frac{N-M}{M(N-1)}}.
\end{equation*}
\end{thm}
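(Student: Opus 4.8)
The plan is to exploit the standard double-trace identity for the frame operator $S:=FF^*=\sum_{n=1}^{N}f_nf_n^*$, which is an $M\times M$ positive semidefinite matrix. The idea is to compute $\operatorname{tr}(S)$ and $\operatorname{tr}(S^2)$ in two different ways: one computation will expose the inner products $\langle f_n,f_{n'}\rangle$ that we wish to bound, while the other will inject the dimension $M$ via a low-rank argument. Comparing the two yields a lower bound on the \emph{average} squared coherence, from which the bound on the maximum follows immediately.

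Concretely, I would first record that, since each $f_n$ is a unit vector, $\operatorname{tr}(S)=\sum_{n=1}^{N}\|f_n\|^2=N$. Next I would compute $\operatorname{tr}(S^2)$ by passing to the Gram matrix $G:=F^*F$, using $\operatorname{tr}(S^2)=\operatorname{tr}(FF^*FF^*)=\operatorname{tr}(F^*FF^*F)=\operatorname{tr}(G^2)=\sum_{n,n'}|\langle f_n,f_{n'}\rangle|^2$. Separating the diagonal, which contributes $\sum_n\|f_n\|^4=N$, this gives
\[
\operatorname{tr}(S^2)=N+\sum_{n\neq n'}|\langle f_n,f_{n'}\rangle|^2.
\]
The crux of the argument is an independent lower bound on $\operatorname{tr}(S^2)$. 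Since $S$ is an $M\times M$ positive semidefinite matrix, it has at most $M$ nonzero eigenvalues $\lambda_1,\ldots,\lambda_M\geq0$, and Cauchy--Schwarz gives $\bigl(\sum_{i}\lambda_i\bigr)^2\leq M\sum_i\lambda_i^2$, i.e.\ $\operatorname{tr}(S^2)\geq(\operatorname{tr}S)^2/M=N^2/M$. This single inequality is where the dimension $M$ enters, and it is the step I expect to warrant the most care, as it hinges on exploiting that $S$ acts on $\mathbb{C}^M$ rather than $\mathbb{C}^N$.

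Combining the two expressions yields $\sum_{n\neq n'}|\langle f_n,f_{n'}\rangle|^2\geq N^2/M-N=N(N-M)/M$. Finally, since there are exactly $N(N-1)$ ordered pairs with $n\neq n'$, the maximum of $|\langle f_n,f_{n'}\rangle|^2$ over these pairs is at least their average, so
\[
\max_{n\neq n'}|\langle f_n,f_{n'}\rangle|^2\geq\frac{1}{N(N-1)}\cdot\frac{N(N-M)}{M}=\frac{N-M}{M(N-1)}.
\]
Taking square roots delivers the Welch bound. The entire argument is elementary once the frame-operator viewpoint is adopted; the only genuinely substantive move is the eigenvalue Cauchy--Schwarz estimate, with the concluding max-dominates-average observation being routine.
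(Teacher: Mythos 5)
Your proof is correct. Note that the paper does not prove this statement at all---it imports the Welch bound as a classical result, citing Welch's 1974 paper---and your argument (the trace identities $\operatorname{tr}(S)=N$, $\operatorname{tr}(S^2)=\sum_{n,n'}|\langle f_n,f_{n'}\rangle|^2$, the eigenvalue Cauchy--Schwarz bound $\operatorname{tr}(S^2)\geq(\operatorname{tr}S)^2/M$, and max-exceeds-average) is precisely the standard proof of that result, so there is nothing to compare beyond observing that you have correctly supplied what the paper omits.
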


Specifically, Proposition~2.2 of~\cite{HolmesP:laa04} gives that minimizers of worst-case coherence are optimally robust to two erasures.
For many values of $M$ and $N$, there exist frames which achieve equality in the Welch bound.
In fact, a sequence of unit-norm vectors $F=\{f_n\}_{n=1}^{N}$ achieves the Welch bound if and only if it is an \textit{equiangular tight frame (ETF)}, meaning that it is a tight frame (i.e., $FF^*=A\mathrm{I}_M$) which also satisfies the equiangularity condition that $|\langle f_n, f_{n'}\rangle|$ is constant over all choices of $n\neq n'$~\cite{StrohmerH:03}.
Not only are ETFs minimizers of worst-case coherence, they also have combinatorial symmetries related to strongly regular graphs, difference sets and Steiner systems; these combinatorial structures have each been used to build the only general ETF constructions to date~\cite{FickusMT:10,Waldron:09,XiaZG:05}.

In this subsection, we consider an ETF construction based on a particular difference set.
Let $q$ be a prime power, take $M=q+1$ and $N=q^2+q+1$, and consider the trace map $\mathrm{Tr}:\mathbb{F}_{q^3}\rightarrow\mathbb{F}_q$ defined by $\mathrm{Tr}(\beta)=\beta+\beta^q+\beta^{q^2}$.
Given a generator $\alpha$ of the multiplicative group of $\mathbb{F}_{q^3}$, define the $M$-element subset $\mathcal{M}\subseteq\mathbb{Z}_N$ by $\mathcal{M}=\{t:\mathrm{Tr}(\alpha^t)=0\}$.
By construction, $\mathcal{M}$ has the property that every nonzero member of $\mathbb{Z}_N$ can be uniquely expressed as the difference of two elements of $\mathcal{M}$; this set is called the $(N,M,1)$\textit{-Singer difference set}~\cite{JungnickelPS:07}.
As shown in~\cite{XiaZG:05}, any difference set $\mathcal{M}\subseteq\mathbb{Z}_N$ can be used to build an ETF by taking rows from the $N\times N$ discrete Fourier transform matrix which are indexed by members of $\mathcal{M}$ and then normalizing the resulting columns.
This construction has the following guarantee:

\begin{thm}
\label{thm.etf nerf}
Take $M=q+1$ and $N=q^2+q+1$ for some prime power $q$, and let $F$ be the $M\times N$ equiangular tight frame $F$ constructed from the $(N,M,1)$-Singer difference set, as in~\cite{XiaZG:05}.
Then $F$ is a $(p,C)$-numerically erasure-robust frame for every $p\leq\frac{1}{2}-\frac{C^2}{C^4+1}$.
\end{thm}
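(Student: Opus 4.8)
The plan is to exploit the tightness and equiangularity of $F$ to control, for an \emph{arbitrary} erasure pattern, the entire spectrum of the frame operator of the erased vectors, and then transfer this information to the kept submatrix $F_\mathcal{K}$. Since the $f_n$ are unit-norm, tightness reads $FF^*=\frac{N}{M}\mathrm{I}_M$, and equiangularity together with equality in the Welch bound gives $|\langle f_n,f_{n'}\rangle|^2=\mu^2:=\frac{N-M}{M(N-1)}$ for all $n\neq n'$. Fix $\mathcal{K}$ with $|\mathcal{K}|=K=(1-p)N$ and let $L:=N-K=pN$ count the erased vectors. Then $F_\mathcal{K}F_\mathcal{K}^*=\frac{N}{M}\mathrm{I}_M-G$, where $G:=\sum_{n\notin\mathcal{K}}f_nf_n^*$ is the frame operator of the erased vectors, so the squared singular values of $F_\mathcal{K}$ are exactly the numbers $\frac{N}{M}-\lambda$ as $\lambda$ ranges over the eigenvalues of $G$.

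The crucial step is that the first two spectral moments of the $M\times M$ matrix $G$ are identical for \emph{every} erasure pattern, precisely because $F$ is equiangular. Indeed $\mathrm{tr}(G)=\sum_{n\notin\mathcal{K}}\|f_n\|^2=L$, while $\mathrm{tr}(G^2)=\sum_{n,n'\notin\mathcal{K}}|\langle f_n,f_{n'}\rangle|^2=L+L(L-1)\mu^2$, the diagonal and off-diagonal contributions being pinned down by unit norm and equiangularity. Writing the eigenvalues of $G$ as $\lambda_1,\dots,\lambda_M$ with mean $\bar\lambda=\frac{L}{M}$, these two identities give $\sum_i(\lambda_i-\bar\lambda)^2=\mathrm{tr}(G^2)-M\bar\lambda^2=\frac{L(M-1)(N-L)}{M(N-1)}=:D^2$ after a short simplification. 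Since a single squared deviation cannot exceed the total, every eigenvalue of $G$ lies in $[\bar\lambda-D,\bar\lambda+D]$. Crude alternatives---Gershgorin on the Gram matrix, the Frobenius bound applied to $G$, or interlacing against the full $N\times N$ Gram matrix---are all too lossy to survive the $N\approx M^2$ regime; the point is that this symmetric ``eigenvalue pinching'' about the mean is exactly strong enough, and I expect recognizing that the two-sided moment estimate is the correct device to be the main conceptual hurdle.

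Transferring back to $F_\mathcal{K}$ and using $N-L=K$, the pinching bound yields $\sigma_{\max}(F_\mathcal{K})^2=\frac{N}{M}-\lambda_{\min}(G)\leq\frac{K}{M}+D$ and $\sigma_{\min}(F_\mathcal{K})^2=\frac{N}{M}-\lambda_{\max}(G)\geq\frac{K}{M}-D$, so that
\[
\mathrm{Cond}(F_\mathcal{K})^2\leq\frac{K/M+D}{K/M-D}=\frac{K+MD}{K-MD}.
\]
A direct computation of $\frac{MD}{K}$ now collapses dramatically: one finds $\frac{MD}{K}=\sqrt{\frac{M(M-1)}{N-1}}\,\sqrt{\frac{L}{K}}$, and the defining identity of the $(N,M,1)$-difference set, $M(M-1)=N-1$, makes the first factor equal to $1$, leaving $\frac{MD}{K}=\sqrt{p/(1-p)}$. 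Hence $\mathrm{Cond}(F_\mathcal{K})\leq C$ for every $\mathcal{K}$ as soon as $\frac{K+MD}{K-MD}\leq C^2$, i.e. $\frac{MD}{K}\leq\frac{C^2-1}{C^2+1}$, i.e. $\sqrt{\frac{p}{1-p}}\leq\frac{C^2-1}{C^2+1}$. Squaring and solving the resulting linear inequality in $p$ gives $p\leq\frac{(C^2-1)^2}{2(C^4+1)}=\frac12-\frac{C^2}{C^4+1}$, exactly the claimed range; note that $p<\frac12$ also forces $D<K/M$, so $F_\mathcal{K}$ has full rank and the condition number is finite. The only non-routine inputs are the two moment computations (which use nothing but equiangularity) and the observation that the difference-set parameter relation is what produces the clean threshold.
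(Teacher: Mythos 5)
Your proof is correct and is essentially the paper's argument: both proofs pin the spectrum by bounding the maximum eigenvalue deviation from its mean by the total second-moment (Frobenius) deviation, compute that moment exactly from unit norms and equiangularity, and then invoke the Singer relation $N-1=M(M-1)$ to reduce the condition-number requirement to $\frac{p}{1-p}\leq\bigl(\frac{C^2-1}{C^2+1}\bigr)^2$, which is exactly the stated threshold. The only difference is bookkeeping: you pass to the erased vectors' frame operator $G$ via tightness, writing $F_\mathcal{K}^{}F_\mathcal{K}^*=\frac{N}{M}\mathrm{I}_M-G$, but since $G-\frac{L}{M}\mathrm{I}_M=-(F_\mathcal{K}^{}F_\mathcal{K}^*-\frac{K}{M}\mathrm{I}_M)$, your moment computation on $G$ coincides term-for-term with the paper's computation done directly on $F_\mathcal{K}^{}F_\mathcal{K}^*$ through the cyclic trace identity.
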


This result essentially states that such ETFs are numerically robust to erasure rates of up to 50\%.
Compared to the random construction of the previous section, which required less than 15\% erasures, this is quite an improvement.
Certainly, the frame redundancy $\frac{N}{M}$ is unbounded in this case since $N$ scales as $M^2$, but the reward is significant.
For example, such ETFs are robust to 49\% erasures with a worst-case condition number of 10.
Meanwhile, for $N\gg M$, Theorem~\ref{thm.random construction} only guarantees---with overwhelming probability---a worst-case condition number of 10 when less than 9\% of the frame is erased.

\begin{proof}[Proof of Theorem~\ref{thm.etf nerf}]
Pick some $\mathcal{K}\subseteq\{1,\ldots,N\}$ of size $K=(1-p)N$, and let $\{\lambda_{\mathcal{K};m}\}_{m=1}^M$ denote the eigenvalues of $F_\mathcal{K}^{}F_\mathcal{K}^*$.
Taking $\delta_\mathcal{K}:=\max_m|\frac{M}{K}\lambda_{\mathcal{K};m}-1|$, we have
\begin{equation}
\label{eq.deterministic 1}
\big(\mathrm{Cond}(F_\mathcal{K})\big)^2
=\mathrm{Cond}(F_\mathcal{K}^{}F_\mathcal{K}^*)
=\frac{\lambda_\mathrm{max}(F_\mathcal{K}^{}F_\mathcal{K}^*)}{\lambda_\mathrm{min}(F_\mathcal{K}^{}F_\mathcal{K}^*)}
\leq\frac{1+\delta_\mathcal{K}}{1-\delta_\mathcal{K}}
\end{equation}
provided $\delta_\mathcal{K}<1$; if $\delta_\mathcal{K}\geq1$, then $F_\mathcal{K}$ could be rank deficient.
Moreover, the fact that $F_\mathcal{K}^{}F_\mathcal{K}^*$ and $\mathrm{I}_M$ are simultaneously diagonalizable implies
\begin{equation}
\label{eq.delta bound 1}
\delta_\mathcal{K}^2
=\tfrac{M^2}{K^2}\max_{m\in\{1,\ldots,M\}}|\lambda_{\mathcal{K};m}-\tfrac{K}{M}|^2
\leq\tfrac{M^2}{K^2}\sum_{m=1}^M|\lambda_{\mathcal{K};m}-\tfrac{K}{M}|^2
=\tfrac{M^2}{K^2}\mathrm{Tr}[(F_\mathcal{K}^{}F_\mathcal{K}^*-\tfrac{K}{M}\mathrm{I}_M)^2].
\end{equation}
From here, the cyclic property of the trace and the fact that $F$ has unit-norm columns give
\begin{align}
\nonumber
\mathrm{Tr}[(F_\mathcal{K}^{}F_\mathcal{K}^*-\tfrac{K}{M}\mathrm{I}_M)^2]
\nonumber
&=\mathrm{Tr}[(F_\mathcal{K}^{}F_\mathcal{K}^*)^2]-\tfrac{2K}{M}\mathrm{Tr}[F_\mathcal{K}^{}F_\mathcal{K}^*]+\tfrac{K^2}{M^2}\mathrm{Tr}[\mathrm{I}_M]\\
\nonumber
&=\mathrm{Tr}[(F_\mathcal{K}^*F_\mathcal{K}^{})^2]-\tfrac{2K}{M}\mathrm{Tr}[F_\mathcal{K}^*F_\mathcal{K}^{}]+\tfrac{K^2}{M}\\
\label{eq.delta bound 2}
&=\sum_{k\in\mathcal{K}}\sum_{k'\in\mathcal{K}}|\langle f_k,f_{k'}\rangle|^2-\tfrac{K^2}{M}.
\end{align}
Since $F$ is an ETF, the inner products between distinct frame elements achieve equality in the Welch bound: $|\langle f_k,f_{k'}\rangle|^2=\frac{N-M}{M(N-1)}$ for every $k\neq k'$.
Applying this to \eqref{eq.delta bound 2} and substituting into \eqref{eq.delta bound 1} then gives
\begin{equation}
\label{eq.deterministic 2}
\delta_\mathcal{K}^2
\leq \frac{M^2}{K^2}\bigg(K+K(K-1)\frac{N-M}{M(N-1)}-\frac{K^2}{M}\bigg)
=\frac{M(M-1)(N-K)}{K(N-1)}
=\frac{pM(M-1)}{(1-p)(N-1)}.
\end{equation}
According to the theorem statement, $N=M^2-M+1$ and $p\leq\frac{1}{2}-\frac{C^2}{C^4+1}$, and so
\begin{equation*}
\delta_\mathcal{K}^2
\leq\frac{p}{1-p}
\leq\frac{(C^2-1)^2}{(C^2+1)^2}.
\end{equation*}
Substituting this into \eqref{eq.deterministic 1} therefore gives $\mathrm{Cond}(F_\mathcal{K})\leq C$.
\end{proof}

We note that \eqref{eq.deterministic 2} together with the necessary condition $\delta_\mathcal{K}^2<1$ indicate that of all $M\times N$ ETFs, the above proof technique will only work for those with $N=\Omega(M^2)$ frame elements.
However, as noted in Proposition~2.3 of~\cite{BalanBCE:09}, $M\times N$ ETFs necessarily have $N\leq M^2$, and so the ETFs for which the above proof can demonstrate NERF are \textit{asymptotically maximal}.
A long-standing open problem in frame theory concerns the existence of $M\times N$ ETFs with $N=M^2$, or \textit{maximal ETFs}, and it is easy to verify that Theorem~\ref{thm.etf nerf} also holds for this conjectured family; to date, these are only known to exist for finitely many $M$'s~\cite{Appleby:05}.
As for asymptotically maximal ETFs, the difference set construction of Theorem~\ref{thm.etf nerf} is the only such infinite family known to the authors.
Regardless, a version of Theorem~\ref{thm.etf nerf} holds for every family of asymptotically maximal ETFs, which follows directly from \eqref{eq.deterministic 2}:

\begin{thm}
\label{thm.asymp max etf nerf}
Every $M\times N$ equiangular tight frame with $\frac{N-1}{M(M-1)}\geq\alpha$  is a $(p,C)$-numerically erasure-robust frame for every $p\leq\frac{\alpha(C^2-1)^2}{\alpha(C^2-1)^2+(C^2+1)^2}$.
\end{thm}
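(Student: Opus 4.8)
The plan is to observe that the chain of estimates \eqref{eq.deterministic 1}--\eqref{eq.deterministic 2} in the proof of Theorem~\ref{thm.etf nerf} never exploited the Singer difference set structure: the only frame-specific input was the Welch-bound equality $|\langle f_k,f_{k'}\rangle|^2=\frac{N-M}{M(N-1)}$ together with the unit-norm condition $\|f_n\|=1$, both of which hold for \emph{every} $M\times N$ ETF. First I would fix an arbitrary $\mathcal{K}\subseteq\{1,\ldots,N\}$ of size $K=(1-p)N$ and reproduce the derivation of \eqref{eq.deterministic 2} verbatim to conclude that, for any ETF,
\[
\delta_\mathcal{K}^2\leq\frac{pM(M-1)}{(1-p)(N-1)},
\]
where $\delta_\mathcal{K}:=\max_m|\tfrac MK\lambda_{\mathcal{K};m}-1|$ as in the earlier proof.

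Next I would insert the hypothesis on $\alpha$. Rewriting $\frac{N-1}{M(M-1)}\geq\alpha$ as $\frac{M(M-1)}{N-1}\leq\frac1\alpha$ and substituting into the displayed bound yields $\delta_\mathcal{K}^2\leq\frac{p}{\alpha(1-p)}$. It then remains to check that the assumed erasure rate forces the right-hand side below $\frac{(C^2-1)^2}{(C^2+1)^2}$. Since $p\mapsto\frac{p}{1-p}$ is strictly increasing on $[0,1)$, it suffices to verify the claim at the endpoint $p=\frac{\alpha(C^2-1)^2}{\alpha(C^2-1)^2+(C^2+1)^2}$; a short rearrangement shows that this value of $p$ is precisely the solution of $\frac{p}{\alpha(1-p)}=\frac{(C^2-1)^2}{(C^2+1)^2}$, so the inequality $\delta_\mathcal{K}^2\leq\frac{(C^2-1)^2}{(C^2+1)^2}$ holds throughout the admissible range of $p$.

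Finally I would close the argument using \eqref{eq.deterministic 1}. From $\delta_\mathcal{K}\leq\frac{C^2-1}{C^2+1}<1$ we obtain $(\mathrm{Cond}(F_\mathcal{K}))^2\leq\frac{1+\delta_\mathcal{K}}{1-\delta_\mathcal{K}}\leq\frac{1+\frac{C^2-1}{C^2+1}}{1-\frac{C^2-1}{C^2+1}}=C^2$, and hence $\mathrm{Cond}(F_\mathcal{K})\leq C$. As $\mathcal{K}$ was arbitrary, $F$ is a $(p,C)$-NERF.

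I do not expect a genuine obstacle here: the content lies entirely in recognizing that \eqref{eq.deterministic 2} is structure-independent, and the remaining work is the routine algebraic verification that the stated threshold on $p$ is the sharp one. The only point demanding mild care is confirming $\delta_\mathcal{K}<1$ so that the condition-number bound \eqref{eq.deterministic 1} is valid; this is automatic because $\frac{C^2-1}{C^2+1}<1$ for every finite $C\geq1$.
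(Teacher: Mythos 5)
Your proposal is correct and is exactly the argument the paper intends: the paper gives no separate proof of Theorem~\ref{thm.asymp max etf nerf}, stating only that it ``follows directly from \eqref{eq.deterministic 2}'', since that bound uses nothing beyond the unit-norm and Welch-bound-equality properties shared by all ETFs. Your substitution of $\frac{M(M-1)}{N-1}\leq\frac{1}{\alpha}$ into \eqref{eq.deterministic 2}, the verification that the stated threshold on $p$ is the exact solution of $\frac{p}{\alpha(1-p)}=\frac{(C^2-1)^2}{(C^2+1)^2}$, and the final appeal to \eqref{eq.deterministic 1} supply precisely the routine algebra the paper leaves implicit.
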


Since maximal ETFs are particularly difficult to construct, different fields have turned to \textit{mutually unbiased bases (MUBs)} to fill their need for large frames with low coherence~\cite{PlanatRP:06,StrohmerH:03}.
There are several  $M\times M^2$ MUB constructions, all of which have the property that the inner product between any two columns is of size $0$ or $1/\sqrt{M}$~\cite{Alltop:80,CasazzaF:06,PlanatRP:06}.
As the Welch bound in this case is $1/\sqrt{M+1}$, MUBs are ``almost" ETFs.
It is therefore surprising that the above proof techniques fail to show that MUBs are NERFs.
To illustrate this fact, we consider the MUB version of~\eqref{eq.deterministic 2}:
\begin{equation}
\label{eq.mub fail 1}
\delta_\mathcal{K}^2
\leq\frac{M^2}{K^2}\bigg(K+K(K-1)\frac{1}{M}-\frac{K^2}{M}\bigg)
=\frac{M(M-1)}{K}
=\frac{M-1}{(1-p)M}.
\end{equation}
Due to the necessity of $\delta_\mathcal{K}<1$, this bound will not be useful unless $p<\frac{1}{M}$.
However, even in this case, substituting \eqref{eq.mub fail 1} into \eqref{eq.deterministic 1} gives
\begin{equation}
\label{eq.mub fail 2}
\big(\mathrm{Cond}(F_\mathcal{K})\big)^2
\leq\frac{1+\delta_\mathcal{K}}{1-\delta_\mathcal{K}}
\leq\frac{\sqrt{(1-p)M}+\sqrt{M-1}}{\sqrt{(1-p)M}-\sqrt{M-1}}.
\end{equation}
Further since $0\leq p\leq \frac{1}{M}$, separately bounding the numerator and denominator gives that the right-hand side of \eqref{eq.mub fail 2} is always at least $2\sqrt{M-1}$, meaning \eqref{eq.mub fail 2} says very little about the worst-case condition number, regardless of the erasure rate.

It remains to be seen whether this is a true distinction between ETFs and MUBs or is instead an artifact of our proof techniques.
One way to improve this analysis is to find a better bound on the \textit{frame potential}~\eqref{eq.delta bound 2}, see~\cite{BenedettoF:03}.
To be clear, we can certainly bound it in general using worst-case coherence, and such a bound is tight whenever the frame is equiangular.
However, when the frame is not equiangular, this bound is less than optimal.
For a better bound in the general case, suppose that for every $n\in\{1,\ldots,N\}$, the distribution of the squares of inner products $\{|\langle f_n,f_{n'}\rangle|^2\}_{n'=1}^{N}$ is identical.
In this case, let $d_F\in\mathbb{R}^N$ denote the common sequence of squared inner products, sorted in nonincreasing order.
We can then bound the sum in \eqref{eq.delta bound 2} by exploiting this structure:
\begin{equation}
\label{eq.ip decay}
\sum_{k\in\mathcal{K}}\sum_{k'\in\mathcal{K}}|\langle f_k,f_{k'}\rangle|^2
\leq K\sum_{k=1}^{K}d_F[k].
\end{equation}
Combining bounds~\eqref{eq.delta bound 1}, \eqref{eq.delta bound 2} and~\eqref{eq.ip decay} then yields
\begin{equation}
\label{eq.ip decay 2}
\delta_\mathcal{K}^2
\leq\frac{M^2}{K^2}\bigg(\sum_{k\in\mathcal{K}}\sum_{k'\in\mathcal{K}}|\langle f_k,f_{k'}\rangle|^2-\frac{K^2}{M}\bigg)
\leq M^2\bigg(\frac1K\sum_{k=1}^{K}d_F[k]-\frac 1M\bigg).
\end{equation}
In particular, in order to use~\eqref{eq.ip decay 2} to guarantee $\delta_\mathcal{K}<1$, we want the average of the $K$ largest values of $d_F[k]$ to be close to $\frac 1M$.
Further note that if $F$ is a unit norm tight frame, which necessarily has tight frame constant $A=\frac NM$, then the average of all values of $d_F[k]$ is $\frac1M$:
\begin{equation*}
\frac1N\sum_{k=1}^{N}d_F[k]
=\frac1N\sum_{n=1}^{N}|\langle f_n,f_{n'}\rangle|^2
=\frac1N\frac NM\|f_{n'}\|^2
=\frac1M.
\end{equation*}
In such cases, using \eqref{eq.ip decay 2} to estimate the NERF properties of a given frame reduces to finding how quickly (as a function of $K$) the average of the $K$ largest values of $d_F[k]$ converges to the average of all of its values.

With this refined analysis, we can prove that MUBs are actually NERFs.
We note that the bound \eqref{eq.ip decay} is identical to the worst-case coherence bound unless $K$ is large, since $d_F$ in this case has one copy of $1$, $M(M-1)$ copies of $\frac{1}{M}$, and $M-1$ copies of $0$~\cite{Alltop:80,CasazzaF:06,PlanatRP:06}.
Indeed, analysis with \eqref{eq.ip decay} can only show that MUBs are NERFs when the erasure rate is small:

\begin{thm}
\label{thm.mub nerf}
An $M\times M^2$ frame of mutually unbiased bases is a $(p,C)$-numerically erasure-robust frame for every $p\leq\frac{(C^2-1)^2}{(C^2+1)^2(M+1)}$.
\end{thm}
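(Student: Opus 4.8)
The plan is to feed the explicit coherence structure of the mutually unbiased bases into the refined frame-potential estimate \eqref{eq.ip decay 2} and then convert the resulting bound on $\delta_\mathcal{K}$ into a condition-number bound via \eqref{eq.deterministic 1}. Fix any $\mathcal{K}\subseteq\{1,\ldots,N\}$ of size $K=(1-p)N$ with $N=M^2$. Because the inner-product distribution is the same for every column, the sorted sequence $d_F$ is well-defined, and by assumption it consists of one copy of $1$, then $M(M-1)$ copies of $\frac1M$, and finally $M-1$ copies of $0$. The first step is to bound the partial sum $\sum_{k=1}^{K}d_F[k]$ appearing in \eqref{eq.ip decay 2}. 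Since every entry of $d_F$ is nonnegative and the entries total $\sum_{k=1}^{N}d_F[k]=1+M(M-1)\cdot\tfrac1M=M$, I would simply use $\sum_{k=1}^{K}d_F[k]\leq M$; this is exactly where the vanishing inner products pay off, as once $K\geq M(M-1)+1$ the partial sum is in fact equal to $M$ and otherwise it is still at most $M$.

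With this in hand, the second step is a direct substitution. Plugging $\sum_{k=1}^{K}d_F[k]\leq M$ and $K=(1-p)M^2$ into \eqref{eq.ip decay 2} yields
\[
\delta_\mathcal{K}^2\leq M^2\Big(\tfrac{M}{K}-\tfrac1M\Big)=\frac{Mp}{1-p},
\]
a bound that is uniform over all choices of $\mathcal{K}$, which is precisely what the worst-case (adversarial) definition of a NERF demands.

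The third step converts the hypothesis on $p$ into the needed control on $\delta_\mathcal{K}$. Writing $\gamma:=\frac{(C^2-1)^2}{(C^2+1)^2}$, the assumption reads $p\leq\frac{\gamma}{M+1}$. Since $\frac{Mp}{1-p}$ is increasing in $p$, it suffices to evaluate at $p=\frac{\gamma}{M+1}$, giving $\frac{Mp}{1-p}=\frac{M\gamma}{M+1-\gamma}\leq\gamma$, where the final inequality holds because $\gamma\leq1$ forces $M+1-\gamma\geq M$. Hence $\delta_\mathcal{K}\leq\sqrt{\gamma}=\frac{C^2-1}{C^2+1}<1$, so \eqref{eq.deterministic 1} applies and gives $\big(\mathrm{Cond}(F_\mathcal{K})\big)^2\leq\frac{1+\delta_\mathcal{K}}{1-\delta_\mathcal{K}}\leq\frac{1+\sqrt{\gamma}}{1-\sqrt{\gamma}}=C^2$, i.e.\ $\mathrm{Cond}(F_\mathcal{K})\leq C$, as desired.

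Given the machinery already assembled, these computations are routine, and the only genuinely conceptual point is the first step. The crux is recognizing that $\sum_{k=1}^{K}d_F[k]$ must be controlled by the total frame potential $M$ rather than by the worst-case coherence alone: this is exactly the improvement that \eqref{eq.ip decay} provides over the naive estimate \eqref{eq.mub fail 1}, and it is what rescues MUBs from the pessimistic conclusion of \eqref{eq.mub fail 2}. I would also flag that the clean threshold $\frac{\gamma}{M+1}$ is slightly conservative, since tracking $\frac{Mp}{1-p}\leq\gamma$ exactly permits $p$ up to $\frac{\gamma}{M+\gamma}$; the stated bound then follows by the harmless weakening $M+\gamma\leq M+1$.
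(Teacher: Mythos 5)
Your proof is correct and follows essentially the same route as the paper: both bound the partial sum $\sum_{k=1}^{K}d_F[k]$ by the total frame potential $M$, substitute into \eqref{eq.ip decay 2} to get $\delta_\mathcal{K}^2\leq\frac{pM}{1-p}$, and then convert the hypothesis on $p$ into $\delta_\mathcal{K}^2\leq\frac{(C^2-1)^2}{(C^2+1)^2}$ before invoking \eqref{eq.deterministic 1}. Your closing observation that the threshold could be sharpened to $p\leq\frac{\gamma}{M+\gamma}$ is accurate but immaterial to the stated result.
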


Note that the above guarantee is not nearly as good as the one we got for ETFs, or even for random frames.
However, the result is still of some use; for example, when $M$ is sufficiently large, removing any $0.96M$ of the $M^2$ frame vectors will leave a submatrix of condition number smaller than $10$.

\begin{proof}[Proof of Theorem~\ref{thm.mub nerf}]
Applying~\eqref{eq.ip decay 2} to the distribution $d_F$ of the $M\times M^2$ MUB yields
\begin{equation*}
\delta_\mathcal{K}^2
\leq \frac{M^2}{K}\bigg(\sum_{k=1}^{K}d_F[k]-\frac{K}{M}\bigg)\\
= \frac{M^2}{K}\bigg(1+M(M-1)\frac{1}{M}-\frac{K}{M}\bigg)\\
=\frac{M(M^2-K)}{K}.
\end{equation*}
Since $K=(1-p)N$ and $N=M^2$, we can simplify and apply $p\leq\frac{(C^2-1)^2}{(C^2+1)^2(M+1)}$ to get
\begin{align}
\nonumber
\delta_\mathcal{K}^2
\leq\frac{pM}{1-p}
&\leq\frac{(C^2-1)^2M}{(C^2+1)^2(M+1)-(C^2-1)^2}\\
\label{eq.mub inequality}
&\leq\frac{(C^2-1)^2M}{(C^2+1)^2(M+1)-(C^2+1)^2}
=\frac{(C^2-1)^2}{(C^2+1)^2}.
\end{align}
Substituting this into \eqref{eq.deterministic 1} therefore gives $\mathrm{Cond}(F_\mathcal{K})\leq C$.
\end{proof}

\subsection{Group frames}

In the previous subsection, we demonstrated that mutually unbiased bases are NERFs by exploiting an important property: the distribution of the squares of inner products $\{|\langle f_n,f_{n'}\rangle|^2\}_{n'=1}^{N}$ is identical for every $f_n$.
In this subsection, we will consider a much larger class of unit norm tight frames that enjoy this identical distribution property: group frames.
Given a seed vector $f\in\mathbb{C}^M$ and a finite subgroup $G$ of the group of all $M\times M$ unitary matrices, the corresponding \textit{group frame} is the orbit $\{Uf\}_{U\in G}$ of $f$ under the action of this group, though $\{Uf\}_{U\in G}$ should only be called a frame if the $Uf$'s span.
In fact, if $\|f\|=1$, then $\{Uf\}_{U\in G}$ will be a unit norm tight frame provided the group $G$ is \textit{irreducible}, meaning that for any nonzero $x\in\mathbb{C}^M$ the vectors $\{Ux\}_{U\in G}$ necessarily span $\mathbb{C}^M$; for this and other interesting facts about group frames, see~\cite{ValeW:05}.
Note that for any $U,U'\in G$,
\begin{equation*}
\langle Uf,U'f\rangle
=\langle f,U^*U'f\rangle
=\langle f,U^{-1}U'f\rangle.
\end{equation*}
Since each $U^{-1}$ acts as a permutation on $G$, we conclude that $\{\langle Uf,U'f\rangle\}_{U'\in G}$ is a permutation of $\{\langle f,U'f\rangle\}_{U'\in G}$, thereby confirming our above claim that each row of the Gram matrix $F^*F$ is identically distributed.

To illustrate the usefulness of group frame ideas in estimating $\delta_\mathcal{K}$ with \eqref{eq.ip decay 2}, we will apply it to group frames generated by the symmetric group of the simplex.
First, we define a \textit{(regular) simplex} to be any $M\times(M+1)$ matrix $\Psi$ whose $(M+1)\times(M+1)$ Gram matrix is $\Psi^*\Psi=\frac{M+1}{M}\mathrm{I}_{M+1}-\frac{1}{M}\mathrm{J}_{M+1}$, where $\mathrm{J}_{M+1}$ denotes an $(M+1)\times(M+1)$ matrix of ones.
Notice that the spectrum of $\Psi^*\Psi$ consists of $M$ copies of $\frac{M+1}{M}$ and one value of $0$; since this is a zero-padded version of the spectrum of the $M\times M$ frame operator $\Psi\Psi^*$, we conclude that $\Psi\Psi^*=\frac{M+1}{M}\mathrm{I}_M$, meaning $\Psi$ is a tight frame.
In fact, since the off-diagonal entries of $\Psi^*\Psi$ are all equal in size (to the Welch bound), $\Psi$ is an equiangular tight frame.

The simplex plays an important role in finite frame theory.
Indeed, the Mercedes-Benz frame and the vertices of the tetrahedron, being 2- and 3-dimensional realizations of the simplex, serve as fundamental examples of frames~\cite{BenedettoF:03,ValeW:05}.
Simplices can also be easily expressed in higher dimensions by removing the row of 1's from an $(M+1)\times(M+1)$ discrete Fourier transform matrix or Hadamard matrix and then normalizing the resulting columns.
This representation of simplices plays a key role in the construction of Steiner ETFs~\cite{FickusMT:10}.
In this paper, we are specifically interested in the \textit{symmetries} of the simplex.
In general, the \textit{symmetry group} of a frame is the set of all matrices which, when acting on frame elements, permute them.
The following result gives a particularly nice description of the symmetry group of the simplex:

\begin{lem}
\label{lem.simplex}
The symmetry group of an $M\times(M+1)$ regular simplex $\Psi$ is the set of all matrices of the form $U=\frac{M}{M+1}\Psi P\Psi^*$, where $P$ is an $(M+1)\times(M+1)$ permutation matrix.
\end{lem}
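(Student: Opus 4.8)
The plan is to reformulate the symmetry condition as a single matrix equation and then establish the two set inclusions. Recall that by definition a matrix $U$ lies in the symmetry group exactly when it permutes the columns of $\Psi$, i.e., when $U\Psi = \Psi P$ for some $(M+1)\times(M+1)$ permutation matrix $P$. Thus the entire argument reduces to analyzing this equation, and the only inputs I expect to need are the two defining spectral facts $\Psi^*\Psi = \frac{M+1}{M}\mathrm{I}_{M+1} - \frac1M\mathrm{J}_{M+1}$ and $\Psi\Psi^* = \frac{M+1}{M}\mathrm{I}_M$.

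For the inclusion that every symmetry has the claimed form, I would start from $U\Psi = \Psi P$ and right-multiply by $\Psi^*$ to get $U(\Psi\Psi^*) = \Psi P\Psi^*$. Since $\Psi\Psi^* = \frac{M+1}{M}\mathrm{I}_M$ is invertible, this forces $U = \frac{M}{M+1}\Psi P\Psi^*$ immediately. Because the columns of $\Psi$ span $\mathbb{C}^M$, the equation $U\Psi = \Psi P$ in fact determines $U$ uniquely, so for each $P$ this is the only possible symmetry.

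The reverse inclusion is the substantive step: fixing a permutation matrix $P$, set $U = \frac{M}{M+1}\Psi P\Psi^*$ and verify that it genuinely permutes the frame, i.e., $U\Psi = \Psi P$. Substituting the formula for $\Psi^*\Psi$ yields $U\Psi = \Psi P - \frac{1}{M+1}\Psi P\mathrm{J}_{M+1}$, and the two key simplifications are $P\mathrm{J}_{M+1} = \mathrm{J}_{M+1}$ (a permutation fixes the all-ones vector) and $\Psi\mathrm{J}_{M+1} = 0$. The latter encodes the fact that the simplex vectors sum to zero; I would prove it by writing $\mathrm{J}_{M+1} = \mathbf{1}\mathbf{1}^*$ and checking $\Psi^*\Psi\mathbf{1} = 0$, from which $\Psi\mathbf{1} = 0$ follows by the injectivity of $\Psi^*$ (a consequence of $\Psi\Psi^*$ being invertible). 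Together these give $U\Psi = \Psi P$, so $U$ is indeed a symmetry.

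Finally, to confirm these matrices form a genuine group of symmetries, I would check unitarity: computing $UU^* = \frac{M^2}{(M+1)^2}\Psi(P\Psi^*\Psi P^*)\Psi^*$ and using $P\mathrm{J}_{M+1}P^* = \mathrm{J}_{M+1}$ collapses $P\Psi^*\Psi P^*$ back to $\Psi^*\Psi$, after which $\Psi\Psi^*\Psi\Psi^* = \frac{(M+1)^2}{M^2}\mathrm{I}_M$ gives $UU^* = \mathrm{I}_M$. I expect the main obstacle to be the reverse-inclusion verification, and within it the isolation and proof of the vanishing identity $\Psi\mathbf{1} = 0$; once that is in hand, the remaining manipulations are routine rank-one and permutation bookkeeping.
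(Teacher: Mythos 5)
Your proposal is correct and follows essentially the same route as the paper's proof: right-multiplying $U\Psi=\Psi P$ by $\Psi^*$ and invoking $\Psi\Psi^*=\frac{M+1}{M}\mathrm{I}_M$ for the forward inclusion, then for the reverse inclusion substituting the formula for $\Psi^*\Psi$ and reducing everything to the identity $\Psi 1_{M+1}=0$ (which you get from $\Psi^*\Psi 1_{M+1}=0$ and injectivity of $\Psi^*$, where the paper instead computes $\|\Psi 1_{M+1}\|^2=1_{M+1}^*\Psi^*\Psi 1_{M+1}^{}=0$ --- an immaterial difference). Your closing unitarity check $UU^*=\mathrm{I}_M$ is a harmless extra that is not required by the paper's definition of the symmetry group, though it is consistent with how these symmetries are later used as a unitary group.
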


\begin{proof}
The symmetry group of $\Psi$ is the set of all matrices $U$ for which there exists a permutation matrix $P$ such that $U\Psi=\Psi P$.
Note this implies $U\Psi\Psi^*=\Psi P\Psi^*$ which, since $\Psi\Psi^*=\frac{M+1}{M}\mathrm{I}_M$, further implies $U=\frac{M}{M+1}\Psi P\Psi^*$.
In other words, for each member $U$ of the symmetry group of $\Psi$, there is a unique permutation matrix $P$ such that $U\Psi=\Psi P$.
Thus, all that remains to be shown is that for each permutation matrix $P$, the matrix $U=\frac{M}{M+1}\Psi P\Psi^*$ satisfies $U\Psi=\Psi P$.
To this end, note
\begin{equation*}
U\Psi
=\tfrac{M}{M+1}\Psi P\Psi^*\Psi
=\tfrac{M}{M+1}\Psi P(\tfrac{M+1}{M}\mathrm{I}_{M+1}-\tfrac{1}{M}\mathrm{J}_{M+1})
=\Psi P-\tfrac{1}{M+1}\Psi P\mathrm{J}_{M+1}.
\end{equation*}
It therefore suffices to show that $\Psi P\mathrm{J}_{M+1}=0$.  To do this, factor $J_{M+1}$ as an outer product of an all-ones vector with itself, a vector which happens to be preserved by permutations: $\Psi P\mathrm{J}_{M+1}=\Psi P1_{M+1}^{}1_{M+1}^*=\Psi 1_{M+1}^{}1_{M+1}^*$.  Then note that $\Psi 1_{M+1}=0$:
\begin{equation*}
\|\Psi 1_{M+1}\|^2
=1_{M+1}^*\Psi^*\Psi1_{M+1}^{}
=1_{M+1}^*(\tfrac{M+1}{M}\mathrm{I}_{M+1}-\tfrac{1}{M}1_{M+1}^{}1_{M+1}^*)1_{M+1}^{}
=0.\qedhere
\end{equation*}
\end{proof}

From Lemma~\ref{lem.simplex}, we can deduce that the symmetry group of an $M\times(M+1)$ simplex $\Psi$ is the symmetric group on $M+1$ letters, and so we denote it by $S_{M+1}$.
We are interested in the frames formed by applying the $(M+1)!$ members of $S_{M+1}$ to unit vectors.
We claim that such frames are automatically unit norm tight frames.
Moreover, motivated by \eqref{eq.ip decay 2}, we further seek the distribution $d_F$ of the squared-moduli of the inner products of the frame elements with each other.

Here, it is helpful to note that $\Phi^*:=\sqrt{M/(M+1)}\Psi^*$ is a unitary transformation between $\mathbb{C}^M$ and the $M$-dimensional orthogonal complement $1_{M+1}^\perp$ of the $(M+1)$-dimensional all-ones vector; the proof of this fact is straightforward and is not included here.
Indeed, writing any unit-norm vector $f\in\mathbb{C}^M$ as $f=\Phi g$ where $g\in1_{M+1}^\perp$ has $\|g\|=1$, we have inner products of the form:
\begin{equation}
\label{eq.ip relation}
\langle f,Uf\rangle
=\langle f,\tfrac{M}{M+1}\Psi P\Psi^*f\rangle
=\langle \Phi^* f, P\Phi^* f\rangle
=\langle g , Pg \rangle.
\end{equation}
Moreover, as noted above, our group frame will be tight provided that for any $x\neq0$ the following vectors span $\mathbb{C}^M$:
\begin{equation*}
\{Ux\}_{U\in G}
=\{\tfrac{M}{M+1}\Psi P\Psi^*x\}_{P\in S_{M+1}}
=\{\Phi P\Phi^*x\}_{P\in S_{M+1}},
\end{equation*}
which is equivalent to having that $\{Py\}_{P\in S_{M+1}}$ spans $1_{M+1}^\perp$ for any nonzero $y\in1_{M+1}^\perp$.
This in turn is equivalent to showing that $z=0$ is the only choice of $z\in 1_{M+1}^\perp$ for which $\langle z, Py\rangle=0$ for all permutations $P$.
To do this, fix any indices $n_1\neq n_2, n_3\neq n_4$ from $\{1,\dotsc,M+1\}$, and consider the zero inner product $\langle z, P_1y\rangle$ that arises from any permutation $P_1$ which takes $n_3$ to $n_1$ and $n_4$ to $n_2$.
From this, now subtract the zero inner product from a permutation $P_2$ which is identical to $P_1$, except that it takes $n_3$ to $n_2$ and $n_4$ to $n_1$:
\begin{align}
\nonumber
0
&=\langle z, P_1y\rangle-\langle z, P_2y\rangle\\
\nonumber
&=z[n_1]\overline{y[n_3]}+z[n_2]\overline{y[n_4]}-z[n_1]\overline{y[n_4]}-z[n_2]\overline{y[n_3]}\\
\label{eq.proving simplex group frame is tight}
&=(z[n_1]-z[n_2])\overline{(y[n_3]-y[n_4])}.
\end{align}
Now, since $0\neq y\in 1_{M+1}^\perp$ we have that $y$ is a nonzero vector whose entries sum to zero, and so in particular there exists indices $n_3$ and $n_4$ such that $y[n_3]-y[n_4]\neq0$.
As such, \eqref{eq.proving simplex group frame is tight} implies that $z[n_1]=z[n_2]$ for every choice of $n_1\neq n_2$, namely that the entries of $z$ are all equal.
Since $z\in 1_{M+1}^\perp$, this means $z=0$ as claimed.
We summarize these facts below:

\begin{thm}
Let $\Psi$ be an $M\times(M+1)$ matrix whose unit columns form a regular simplex in $\mathbb{C}^M$.  
Let $f=\sqrt{M/(M+1)}\Psi g$, where $g$ is any unit-norm vector $g\in\mathbb{C}^{M+1}$ whose entries sum to zero.
Then the group frame
\begin{equation*}
\{Uf\}_{U\in G}
:=\{\tfrac M{M+1}\Psi P\Psi^* f\}_{P\in S_{M+1}}
\end{equation*}
is a unit norm tight frame of $(M+1)!$ elements for $\mathbb{C}^M$.
Moreover, each row of the Gram matrix of this frame has entries of the form $\{\langle f,Uf\rangle\}_{U\in G}=\{\langle g,Pg\rangle\}_{P\in S_{M+1}}$.
Here, $P$ ranges over all $(M+1)\times (M+1)$ permutation matrices.
\end{thm}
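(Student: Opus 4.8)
The plan is to establish the three assertions---cardinality $(M+1)!$, the unit norm tight frame property, and the structure of the Gram matrix rows---by assembling the structural facts about the simplex symmetry group that precede the statement. The central objects are the parametrization $P\mapsto U=\frac{M}{M+1}\Psi P\Psi^*$ from Lemma~\ref{lem.simplex}, and the map $\Phi=\sqrt{M/(M+1)}\Psi$, for which a short computation with $\Psi^*\Psi=\frac{M+1}{M}\mathrm{I}_{M+1}-\frac1M\mathrm{J}_{M+1}$ shows $\Phi\Phi^*=\mathrm{I}_M$ while $\Phi^*\Phi$ is the orthogonal projection onto $1_{M+1}^\perp$.

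First I would count $G$. Lemma~\ref{lem.simplex} already shows that every symmetry $U$ determines a unique permutation $P$ with $U=\frac{M}{M+1}\Psi P\Psi^*$ and that every $P$ arises this way, so $P\mapsto U$ is a surjection from permutation matrices onto $G$; it remains only to check injectivity. If $\Psi P_1\Psi^*=\Psi P_2\Psi^*$ then $(P_1-P_2)$ maps $1_{M+1}^\perp$ into $\ker\Psi=\mathrm{span}(1_{M+1})$, and since a permutation matrix fixes $1_{M+1}$ and hence preserves $1_{M+1}^\perp$, this forces $P_1=P_2$. Thus $|G|=(M+1)!$ and the frame has $(M+1)!$ elements. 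For the unit-norm claim, writing $f=\Phi g$ with $\|g\|=1$ and $g\in 1_{M+1}^\perp$ gives $\|f\|=\|g\|=1$ because $\Phi$ restricts to an isometry on $1_{M+1}^\perp$; and since $U=\Phi P\Phi^*$ satisfies $U^*U=\Phi P^*(\Phi^*\Phi)P\Phi^*=\Phi(\Phi^*\Phi)\Phi^*=(\Phi\Phi^*)^2=\mathrm{I}_M$ (using that $P$ commutes with the projection $\Phi^*\Phi$), each $U$ is unitary, whence $\|Uf\|=\|f\|=1$.

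The tightness is the real content, and I would obtain it from the standard group-frame fact recalled earlier: an irreducible unitary group applied to a unit seed produces a unit norm tight frame. It therefore suffices to prove that $G$ is irreducible, i.e., that $\{Ux\}_{U\in G}$ spans $\mathbb{C}^M$ for every nonzero $x$. Transporting through $\Phi$, this is equivalent to showing that $\{Py\}_{P\in S_{M+1}}$ spans $1_{M+1}^\perp$ for every nonzero $y\in 1_{M+1}^\perp$, which in turn amounts to showing that the only $z\in 1_{M+1}^\perp$ orthogonal to every $Py$ is $z=0$. Here I would use the difference trick of~\eqref{eq.proving simplex group frame is tight}: subtracting the two vanishing inner products coming from permutations that differ by a single transposition yields $(z[n_1]-z[n_2])\overline{(y[n_3]-y[n_4])}=0$, and since $y\in 1_{M+1}^\perp$ is nonzero it has two unequal entries, forcing all entries of $z$ to coincide; combined with $z\in 1_{M+1}^\perp$ this gives $z=0$.

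Finally, for the Gram matrix I would use $\langle Uf,U'f\rangle=\langle f,U^{-1}U'f\rangle$; since $U^{-1}$ permutes $G$, the row indexed by $U$ is a rearrangement of $\{\langle f,U'f\rangle\}_{U'\in G}$, so all rows are identically distributed, and the change of variables $\langle f,Uf\rangle=\langle g,Pg\rangle$ of~\eqref{eq.ip relation} identifies these entries with $\{\langle g,Pg\rangle\}_{P\in S_{M+1}}$. I expect the irreducibility argument of the third paragraph to be the only step requiring genuine insight; the cardinality, unitarity, and Gram-row claims are mechanical consequences of Lemma~\ref{lem.simplex} and the coisometry identities for $\Phi$.
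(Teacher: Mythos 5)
Your proposal is correct and follows essentially the same route as the paper: tightness via irreducibility of the permutation action on $1_{M+1}^\perp$ using the difference trick of~\eqref{eq.proving simplex group frame is tight}, and the Gram-row structure via~\eqref{eq.ip relation} together with the fact that $U^{-1}$ permutes $G$. The only additions are details the paper leaves implicit---the injectivity of $P\mapsto\frac{M}{M+1}\Psi P\Psi^*$ and the unitarity of each $U$ via the coisometry identities for $\Phi$---which are welcome but do not change the argument.
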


We now use these ideas to construct a frame to be used in conjunction with the bound~\eqref{eq.ip decay 2}, where $d_F[k]$ denotes the $k$th largest value of the form $|\langle f,Uf\rangle|^2=|\langle g,Pg\rangle|^2$.
In particular, our goal is to find a unit norm vector $g\in1_{M+1}^{\perp}$ for which the average of the $K$ largest values of $d_F[k]$ is very close to the average of all of its values: $\frac1M$.

Moreover, considering the underlying application of NERFs, we prefer not to transmit as many as $(M+1)!$ frame coefficients to convey an $M$-dimensional signal.
For this reason, we seek vectors $g$ which are fixed by a large subgroup of permutation matrices, namely, vectors with large level sets; this way, we can get away with only using representatives of distinct cosets of this large subgroup.
In this paper, we only consider vectors of two level sets, say
\begin{equation}
\label{eq.g defn}
g=(\underbrace{a,a,\ldots,a}_{L\mbox{\tiny{ times}}},\!\!\underbrace{b,b,\ldots,b}_{M+1-L\mbox{\tiny{ times}}}\!\!).
\end{equation}
Choosing $g$ in this way guarantees that the corresponding group frame only has $\binom{M+1}{L}$ distinct elements.
Moreover, since each of these unique elements appears the same number of times, namely $L!(M+1-L)!$ times, the $\binom{M+1}{L}$-element subframe is still tight.

To estimate the NERF properties of such frames using~\eqref{eq.ip decay 2}, we first need to find explicit expressions for $a$ and $b$.
Here, the condition $\langle g,1_{M+1}\rangle=0$ implies $La+(M+1-L)b=0$.
Combining this with the fact that $g$ has unit norm then gives
\begin{equation}
\label{eq.a b defn}
a=\sqrt{\frac{M+1-L}{(M+1)L}},\qquad
b=-\sqrt{\frac{L}{(M+1)(M+1-L)}},
\end{equation}
where we take $a>0$ without loss of generality.
Next, note that $\langle g,Pg\rangle$ is completely determined by the number $J$ of indices $n$ for which $g[n]=(Pg)[n]=a$.
This leads to the following calculation:
\begin{equation*}
\langle f,Uf\rangle
=\langle g,Pg\rangle
=Ja^2+2(L-J)ab+(M+1+J-2L)b^2
=\frac{J(M+1)-L^2}{L(M+1-L)}.
\end{equation*}
Moreover, of the $\binom{M+1}{L}$ distinct $Uf$'s in this construction, there are $\binom{L}{J}\binom{M+1-L}{L-J}$ which produce the above inner product, since $J$ of the $a$'s in $Pg$ must align with $a$'s in $g$, while the other $L-J$ $a$'s in $Pg$ align with $b$'s in $g$.  
In the special case where $g$ has $L=2$ $a$'s, we have a total of $\binom{M+1}{2}$ distinct $Uf$'s, and the distribution of inner products is given by
\begin{equation}
\label{eq.ips in group frame}
\{\langle f,Uf\rangle\}
=\left\{\begin{array}{cl}1&\mbox{with multiplicity }1,\\\frac{M-3}{2(M-1)}&\mbox{with multiplicity }2(M-1),\\-\frac{2}{M-1}&\mbox{with multiplicity }\frac{1}{2}(M-1)(M-2).\end{array}\right.
\end{equation}
As verified below, substituting this fact into~\eqref{eq.ip decay 2} yields the following result:

\begin{thm}
\label{thm.group frame nerf}
Pick $M\geq7$ and consider the $M\times\binom{M+1}{2}$ frame $F$ with columns of the form $\sqrt{M/(M+1)}\Psi Pg$, where $\Psi$ is an $M\times(M+1)$ regular simplex and the $Pg$'s are distinct permutations of $g$, which is defined by \eqref{eq.g defn} and \eqref{eq.a b defn} with $L=2$.
Then $F$ is a $(p,C)$-numerically erasure-robust frame for every $p\leq\frac{(C^2-1)^2}{(C^2+1)^2(M+1)}$.
\end{thm}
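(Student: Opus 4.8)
The plan is to run everything through the master estimate \eqref{eq.ip decay 2}, so that the whole theorem reduces to bounding the average of the $K$ largest entries of the sorted squared-inner-product distribution $d_F$. By \eqref{eq.ips in group frame}, the inner products $\langle f,Uf\rangle$ take only three values, so $|\langle f,Uf\rangle|^2$ takes the values $1$, $\bigl(\tfrac{M-3}{2(M-1)}\bigr)^2$, and $\bigl(\tfrac{2}{M-1}\bigr)^2$, with respective multiplicities $1$, $2(M-1)$, and $\tfrac12(M-1)(M-2)$ (which sum to $\binom{M+1}{2}=N$, as they must). First I would sort these three blocks into $d_F$.

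The one genuinely delicate point is this ordering, and it is precisely here that the hypothesis $M\geq7$ enters. Comparing the two nontrivial values amounts to comparing $(M-3)^2$ with $16$; since $M\geq7$ gives $(M-3)^2\geq16$, we obtain $\bigl(\tfrac{M-3}{2(M-1)}\bigr)^2\geq\bigl(\tfrac{2}{M-1}\bigr)^2$. Hence $d_F$ begins with the single value $1$, continues with the $2(M-1)$ copies of $\tfrac{(M-3)^2}{4(M-1)^2}$, and ends with the $\tfrac12(M-1)(M-2)$ copies of the bottom value $\tfrac{4}{(M-1)^2}$.

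Rather than summing the top $K$ entries directly, I would compute the partial sum as the whole minus the tail. Since $F$ is a unit norm tight frame, $\tfrac1N\sum_{k=1}^N d_F[k]=\tfrac1M$, i.e.\ $\sum_{k=1}^N d_F[k]=\tfrac NM$ (one may also verify this arithmetically from the three values and their multiplicities). For $p$ in the claimed range one checks that $N-K=pN\leq\tfrac12(M-1)(M-2)$, so the $N-K$ smallest entries are all equal to $\tfrac{4}{(M-1)^2}$; thus $\sum_{k=1}^K d_F[k]=\tfrac NM-pN\cdot\tfrac{4}{(M-1)^2}$. Dividing by $K=(1-p)N$ and subtracting $\tfrac1M$ makes the $N$'s cancel and collapses to
\begin{equation*}
\frac1K\sum_{k=1}^K d_F[k]-\frac1M=\frac{p}{1-p}\Bigl(\frac1M-\frac{4}{(M-1)^2}\Bigr),
\end{equation*}
whence \eqref{eq.ip decay 2} yields $\delta_\mathcal{K}^2\leq\frac{p}{1-p}\bigl(M-\frac{4M^2}{(M-1)^2}\bigr)\leq\frac{pM}{1-p}$.

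At this point the estimate $\delta_\mathcal{K}^2\leq\frac{pM}{1-p}$ is exactly the one obtained for mutually unbiased bases, so the argument concludes verbatim as in \eqref{eq.mub inequality}: inserting $p\leq\frac{(C^2-1)^2}{(C^2+1)^2(M+1)}$ and using $(C^2-1)^2\leq(C^2+1)^2$ in the denominator gives $\delta_\mathcal{K}^2\leq\frac{(C^2-1)^2}{(C^2+1)^2}$, and then \eqref{eq.deterministic 1} gives $\mathrm{Cond}(F_\mathcal{K})\leq C$ for every $\mathcal{K}$. I expect the only real work to be the bookkeeping of the two middle paragraphs — getting the sorted order right (the source of the $M\geq7$ restriction) and confirming that the tail indices land entirely inside the smallest block; once those are secured the computation is mechanical.
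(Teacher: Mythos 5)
Your proposal is correct and follows essentially the same route as the paper's proof: both reduce to \eqref{eq.ip decay 2}, use $M\geq7$ to sort the three values of $d_F$, verify that the top $K$ entries exhaust the singleton and middle blocks (your condition $pN\leq\tfrac12(M-1)(M-2)$ is equivalent to the paper's $K\geq2(M-1)+1$), and arrive at the identical intermediate bound $\delta_\mathcal{K}^2\leq\frac{pM(M^2-6M+1)}{(1-p)(M-1)^2}\leq\frac{pM}{1-p}$ before finishing via \eqref{eq.mub inequality} and \eqref{eq.deterministic 1}. The only cosmetic difference is that you compute the top-$K$ sum as total minus tail (using tightness), whereas the paper sums the top $K$ entries directly.
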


The above guarantee bears a striking resemblance to Theorem~\ref{thm.mub nerf}, despite the distribution $d_F$ being significantly different. 
Again, while this result is not nearly as good as the ones we got for ETFs or random frames, it still gives something; for example, removing any $0.48M$ of the $\binom{M+1}{2}$ frame vectors will leave a submatrix of condition number smaller than 10.
As one would expect, there are similar NERF results for the frames that correspond to larger values of $L$, but we do not report them here.

\begin{proof}[Proof of Theorem~\ref{thm.group frame nerf}]
Since $M\geq 7$, the sizes of the inner products in \eqref{eq.ips in group frame} are nonincreasing, and so $d_F$ is defined accordingly.
Also, taking $K=(1-p)N$ with $p\leq\frac{(C^2-1)^2}{(C^2+1)^2(M+1)}\leq\frac{1}{M+1}$, we claim that $K\geq2(M-1)+1$.
Indeed,
\begin{equation*}
K
\geq\Big(1-\frac{1}{M+1}\Big)N
=\frac{M^2}{2}
\geq 2(M-1)+1,
\end{equation*}
where the last inequality follows from $M\geq7\geq2+\sqrt{2}$.
Since $K\geq2(M-1)+1$, then applying~\eqref{eq.ip decay 2} to \eqref{eq.ips in group frame} yields
\begin{align*}
\delta_\mathcal{K}^2
&\leq \frac{M^2}{K}\bigg(\sum_{k=1}^{K}d_F[k]-\frac{K}{M}\bigg)\\
&= \frac{M^2}{K}\bigg(
1+2(M-1)\Big(\frac{M-3}{2(M-1)}\Big)^2+\big(K-(2M-1)\big)\Big(\frac{2}{M-1}\Big)^2
-\frac{K}{M}\bigg)\\
&=\bigg(\frac{M(M+1)-2K}{2K}\bigg)\bigg(\frac{M(M^2-6M+1)}{(M-1)^2}\bigg).
\end{align*}
Since $K=(1-p)N$ and $N=\binom{M+1}{2}$, we can simplify to get
\begin{equation*}
\delta_\mathcal{K}^2
\leq\frac{pM(M^2-6M+1)}{(1-p)(M-1)^2}
\leq\frac{pM}{1-p}.
\end{equation*}
From here, $p\leq\frac{(C^2-1)^2}{(C^2+1)^2(M+1)}$ and \eqref{eq.mub inequality} together imply $\delta_\mathcal{K}^2\leq\frac{(C^2-1)^2}{(C^2+1)^2}$, which we substitute into \eqref{eq.deterministic 1} to conclude that $\mathrm{Cond}(F_\mathcal{K})\leq C$.
\end{proof}

\section{Limiting our expectations}

The previous section gave four different constructions of numerically erasure-robust frames.
The last three constructions were deterministic, and their proofs hinged on how coherent a subcollection of frame vectors can be.
In this section, we shed some light on the fundamental limits of NERFs by again considering the coherence of frame subcollections.
We start with the following lemma, which says that a matrix with similar columns will have a large condition number:

\begin{lem}
\label{lem.bicap}
Take an $M\times N$ matrix $F$ with unit-norm columns.
Then for every unit vector $x\in\mathbb{R}^M$,
\begin{equation*}
\big(\mathrm{Cond}(F)\big)^2
\geq \frac{(M-1)\|F^*x\|^2}{N-\|F^*x\|^2}.
\end{equation*}
\end{lem}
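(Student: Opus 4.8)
The plan is to reduce the claim to a spectral inequality for the Hermitian positive-semidefinite matrix $FF^*$, then play two constraints against each other: a trace identity supplied by the unit-norm columns, and the value of the Rayleigh quotient of $FF^*$ at $x$. Write the eigenvalues of $FF^*$ as $\lambda_1\geq\cdots\geq\lambda_M\geq0$. As in \eqref{eq.deterministic 1}, squared singular values are eigenvalues of $FF^*$, so $\big(\mathrm{Cond}(F)\big)^2=\lambda_1/\lambda_M$. The unit-norm hypothesis gives the trace identity $\mathrm{Tr}(FF^*)=\sum_{n}\|f_n\|^2=N$, that is, $\sum_{m=1}^M\lambda_m=N$. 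If $F$ is rank-deficient then $\lambda_M=0$, $\mathrm{Cond}(F)=\infty$, and the asserted bound is vacuous, so I would assume $\lambda_M>0$.

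The heart of the argument is a two-sided spectral estimate: a lower bound on $\lambda_1$ and an upper bound on $\lambda_M$. For the lower bound, evaluating the Rayleigh quotient at the unit vector $x$ gives $\lambda_1\geq x^*FF^*x=\|F^*x\|^2$. For the upper bound, I would remove the top eigenvalue from the trace identity: the remaining $M-1$ eigenvalues sum to $N-\lambda_1$, and since $\lambda_M$ is the smallest among them, it cannot exceed their average, so $\lambda_M\leq(N-\lambda_1)/(M-1)$. Dividing these two facts yields $\lambda_1/\lambda_M\geq(M-1)\lambda_1/(N-\lambda_1)$.

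To finish, I would observe that the map $\lambda\mapsto(M-1)\lambda/(N-\lambda)$ is increasing on $[0,N)$, since its derivative equals $(M-1)N/(N-\lambda)^2>0$. Because $\|F^*x\|^2\leq\lambda_1$, replacing $\lambda_1$ by the smaller quantity $\|F^*x\|^2$ only decreases the right-hand side, so $\big(\mathrm{Cond}(F)\big)^2=\lambda_1/\lambda_M\geq(M-1)\lambda_1/(N-\lambda_1)\geq(M-1)\|F^*x\|^2/(N-\|F^*x\|^2)$, which is exactly the claim.

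The step I would watch most carefully is the averaging bound on $\lambda_M$, since it is where the factor $M-1$ (and the implicit use of all $M$ eigenvalues) enters; one must be sure $\lambda_M$ really sits among the $M-1$ eigenvalues being averaged, which is automatic once $\lambda_1$ is singled out. The other orientation check is the final monotonicity replacement: it weakens $\lambda_1$ but in the direction that preserves the inequality. I would also note in passing that $N-\|F^*x\|^2\geq0$ always, with $\|F^*x\|^2=\sum_n|\langle x,f_n\rangle|^2=N$ forcing every column to be a unimodular multiple of $x$ and hence $\mathrm{Cond}(F)=\infty$, so the boundary case again renders the bound trivial rather than problematic.
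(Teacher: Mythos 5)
Your proof is correct, and it shares the paper's core strategy: a two-sided spectral estimate in which $\lambda_{\max}(FF^*)\geq\|F^*x\|^2$ comes from the Rayleigh quotient at $x$, and $\lambda_{\min}(FF^*)$ is bounded above by an averaging argument driven by the unit-norm constraint $\mathrm{Tr}(FF^*)=\sum_{n}\|f_n\|^2=N$. The difference lies in what gets subtracted from the trace before averaging. The paper completes $x$ to an orthonormal basis $\{x_m\}_{m=1}^M$ with $x_1=x$ and averages the Rayleigh quotients $\|F^*x_m\|^2$ over $m=2,\ldots,M$; Parseval applied to each unit-norm column then gives $\lambda_{\min}(FF^*)\leq(N-\|F^*x\|^2)/(M-1)$, and the claimed bound follows immediately by dividing the two estimates. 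You instead subtract the top eigenvalue, obtaining the (tighter) bound $\lambda_M\leq(N-\lambda_1)/(M-1)$, which then forces the extra monotonicity step for $\lambda\mapsto(M-1)\lambda/(N-\lambda)$ to convert $\lambda_1$ back into $\|F^*x\|^2$. So your route costs one more step but proves the marginally stronger intermediate statement $\big(\mathrm{Cond}(F)\big)^2\geq(M-1)\lambda_1/(N-\lambda_1)$, whereas the paper's choice of an orthonormal basis containing $x$ itself is precisely the device that makes any monotonicity argument unnecessary. Your treatment of the degenerate cases ($\lambda_M=0$, and $\|F^*x\|^2=N$) is also sound and handles what the paper leaves implicit.
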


\begin{proof}
First, we have $\lambda_\mathrm{max}(FF^*)=\|F^*\|_2^2\geq\|F^*x\|^2$.
Next, take $\{x_m\}_{m=1}^M$ to be some orthonormal basis with $x_1=x$.
Then $\lambda_\mathrm{min}(FF^*)\leq\|F^*x_m\|^2$ for every $m$, and so averaging over $m=2,\ldots,M$ gives
\begin{equation*}
\lambda_\mathrm{min}(FF^*)
\leq \frac{1}{M-1}\sum_{m=2}^M\|F^*x_m\|^2
=\frac{1}{M-1}\sum_{n=1}^N\sum_{m=2}^M|\langle x_m,f_n\rangle|^2.
\end{equation*}
Since each $f_n$ has unit norm and $\{x_m\}_{m=1}^M$ is an orthonormal basis with $x_1=x$, we continue:
\begin{equation*}
\lambda_\mathrm{min}(FF^*)
\leq \frac{1}{M-1}\sum_{n=1}^N\Big(1-|\langle x,f_n\rangle|^2\Big)
=\frac{N-\|F^*x\|^2}{M-1}.
\end{equation*}
Combining this with our lower bound on $\lambda_\mathrm{max}(FF^*)$ gives the result.
\end{proof}

To be explicit, the lower bound in Lemma~\ref{lem.bicap} is exceedingly large when the columns of $F$ each have a large inner product with $x$.
We now use this lemma to prove the following statement on the fundamental limits of NERFs:

\begin{thm}
\label{thm.fundamental limits}
Take a sequence of real $M\times N_M$ frames $\{F_M\}_{M=1}^\infty$, pick $C>1$, and take a sequence of erasure rates $\{p_M\}_{M=1}^\infty$ such that
\begin{equation}
\label{eq.liminf}
\liminf_{M\rightarrow\infty}p_M
>1-2Q(C),
\qquad
Q(t):=\frac{1}{\sqrt{2\pi}}\int_t^\infty \mathrm{e}^{-u^2/2}\,\mathrm{d}u.
\end{equation}
Then for all sufficiently large $M$, $F_M$ is not a $(p_M,C)$-numerically erasure-robust frame.
\end{thm}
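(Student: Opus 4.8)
The plan is to prove the statement directly: for every sufficiently large $M$ I will exhibit a single index set $\mathcal{K}\subseteq\{1,\ldots,N_M\}$ of size $K=(1-p_M)N_M$ whose submatrix $F_{M,\mathcal{K}}$ satisfies $\mathrm{Cond}(F_{M,\mathcal{K}})>C$, which is exactly a failure of the $(p_M,C)$-NERF property. The guiding intuition, supplied by Lemma~\ref{lem.bicap}, is that a submatrix whose columns all cluster near a single line through the origin is badly conditioned. So I will look for a unit direction $x\in\mathbb{R}^M$ that is well-correlated with at least $K$ of the columns of $F_M$, and then let $\mathcal{K}$ collect those columns.

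First I would record a norm-free version of Lemma~\ref{lem.bicap}. Repeating its proof verbatim but \emph{without} normalizing the columns---using $\lambda_{\max}(F_{M,\mathcal K}F_{M,\mathcal K}^*)\geq x^*F_{M,\mathcal K}F_{M,\mathcal K}^*x$ and averaging $\lambda_{\min}\leq\frac{1}{M-1}\sum_{m=2}^M\|F_{M,\mathcal K}^*x_m\|^2$ over an orthonormal basis $\{x_m\}$ with $x_1=x$---yields, for every unit vector $x$,
\begin{equation*}
\big(\mathrm{Cond}(F_{M,\mathcal K})\big)^2
\geq\frac{(M-1)\sum_{k\in\mathcal K}\langle f_k,x\rangle^2}{\sum_{k\in\mathcal K}\|f_k\|^2-\sum_{k\in\mathcal K}\langle f_k,x\rangle^2}.
\end{equation*}
Writing $\langle f_k,x\rangle^2=\|f_k\|^2\langle\tilde f_k,x\rangle^2$ with $\tilde f_k:=f_k/\|f_k\|$, the key point is that if every retained column obeys $|\langle\tilde f_k,x\rangle|>\tau$, then the common factor $\sum_{k}\|f_k\|^2$ cancels between numerator and denominator, leaving the clean bound $\big(\mathrm{Cond}(F_{M,\mathcal K})\big)^2>\frac{(M-1)\tau^2}{1-\tau^2}$; thus the column norms are irrelevant and no unit-norm hypothesis is needed. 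Choosing $\tau=\tau_M:=C/\sqrt{M-1+C^2}$ makes this lower bound exactly $C^2$, so \emph{any} $\mathcal K$ of size $K$ whose normalized columns each satisfy $|\langle\tilde f_k,x\rangle|>\tau_M$ already witnesses $\mathrm{Cond}(F_{M,\mathcal K})>C$.

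It then remains to produce such a direction together with $K$ suitable columns, and I would do this by averaging. Let $x$ be uniformly distributed on the sphere $S^{M-1}\subseteq\mathbb{R}^M$, and set $\beta_M(\tau):=\mathrm{Pr}[|\langle\tilde f_n,x\rangle|>\tau]$, which by rotational symmetry is the normalized surface measure of the double cap $\{x:|\langle y,x\rangle|>\tau\}$ and is independent of $n$. Then the expected number of columns with $|\langle\tilde f_n,x\rangle|>\tau_M$ is $N_M\,\beta_M(\tau_M)$, and since an integer-valued random variable attains a value at least its mean, it suffices to show $N_M\beta_M(\tau_M)>K$ for large $M$; selecting any $K$ of the witnessing columns then closes the argument via the previous paragraph. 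Because $\tau_M\sim C/\sqrt M$, this reduces to the asymptotics of the cap measure: using that $\sqrt M\langle y,x\rangle$ converges in distribution to a standard Gaussian $Z$, I expect $\beta_M(\tau_M)\to\mathrm{Pr}[|Z|>C]=2Q(C)$. Finally, hypothesis~\eqref{eq.liminf} furnishes an $\varepsilon>0$ with $K/N_M=1-p_M<2Q(C)-\varepsilon$ for all large $M$, whence $\beta_M(\tau_M)>2Q(C)-\varepsilon>K/N_M$ and therefore $N_M\beta_M(\tau_M)>K$, as required.

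The main obstacle I anticipate is precisely this cap-measure limit $\beta_M(\tau_M)\to2Q(C)$ at the \emph{moving} threshold $\tau_M\sim C/\sqrt M$: one must control the spherical projection not merely in distribution but at a cutoff that itself depends on $M$. I would handle it by working directly with the exact projection density, proportional to $(1-s^2)^{(M-3)/2}$ on $[-1,1]$, and substituting $s=u/\sqrt{M-1+C^2}$ so that the cutoff $|s|>\tau_M$ becomes exactly $|u|>C$; verifying via $\tfrac{M-3}{2}\log\!\big(1-\tfrac{u^2}{M-1+C^2}\big)\to-\tfrac{u^2}{2}$, together with a dominated-convergence bound on the tail, that the rescaled integral converges to $\frac{1}{\sqrt{2\pi}}\int_{|u|>C}\mathrm e^{-u^2/2}\,\mathrm du=2Q(C)$. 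Everything else---the norm-free bicap estimate and the probabilistic selection of $x$---is routine once this quantitative central limit statement for uniform points on $S^{M-1}$ is in hand.
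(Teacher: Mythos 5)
Your proposal is correct and follows essentially the same route as the paper's proof: a bi-cap conditioning bound, a pigeonhole/averaging argument over the uniform measure on $\mathbb{S}^{M-1}$ to find a direction correlated with at least $K=(1-p_M)N_M$ columns, and a dominated-convergence computation showing the normalized bi-cap measure at threshold $\sim C/\sqrt{M}$ tends to $2Q(C)$. The only substantive difference is your norm-free version of Lemma~\ref{lem.bicap}, in which the column norms cancel; this is a nice touch, since the theorem as stated does not assume unit-norm columns, whereas the paper's proof applies Lemma~\ref{lem.bicap} directly and thus implicitly treats the frame vectors as points of $\mathbb{S}^{M-1}$.
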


\begin{proof}
For notational simplicity, we write $F=F_M$, $N=N_M$ and $p=p_M$.
Further let $\mathbb{S}^{M-1}$ denote the unit sphere in $\mathbb{R}^{M}$.
For any $x\in\mathbb{S}^{M-1}$, consider the ``polar caps" of the sphere about $\pm x$, namely the set $B(x):=\{y\in\mathbb{S}^{M-1}:|\langle x,y\rangle|^2\geq\frac{C^2}{M}\}$.
For any such \textit{bi-cap}, we may count the number of frame elements that it contains, namely the cardinality of the set $B(x)\cap\{f_n\}_{n=1}^{N}$.
Let $x_0$ denote the point on the sphere whose bi-cap contains the most frame elements.
By the pigeonhole principle, the fraction of frame elements contained in this bi-cap is at least the fraction of its surface area to the surface area of the entire sphere:
\begin{equation*}
\Big|B(x_0)\cap\{f_n\}_{n=1}^{N}\Big|
\geq N\cdot\frac{\mathrm{Area}(B(x))}{\mathrm{Area}(\mathbb{S}^{M-1})}.
\end{equation*}
Assuming for the moment that $1-p\leq\mathrm{Area}(B(x))/\mathrm{Area}(\mathbb{S}^{M-1})$, we may take $\mathcal{K}$ to be the indices of any $K=(1-p)N$ of the $f_n$'s in $B(x_0)\cap\{f_n\}_{n=1}^{N}$.
Then
\begin{equation*}
\|F_\mathcal{K}^*x_0\|^2=\sum_{k\in\mathcal{K}}|\langle x_0,f_k\rangle|^2\geq K\frac{C^2}{M},
\end{equation*}
and so applying Lemma~\ref{lem.bicap} to the $M\times K$ matrix $F_\mathcal{K}$ gives
\begin{equation*}
\big(\mathrm{Cond}(F_\mathcal{K})\big)^2
\geq\frac{(M-1)\|F_\mathcal{K}^*x_0\|^2}{K-\|F_\mathcal{K}^*x_0\|^2}
\geq\frac{(M-1)K\frac{C^2}M}{K-K\frac{C^2}M}
=\frac{M-1}{M-C^2}C^2
>C^2,
\end{equation*}
as claimed.
Thus, it only remains to show that
\begin{equation}
\label{eq.bicap area 0}
1-p\leq\frac{\mathrm{Area}(B(x))}{\mathrm{Area}(\mathbb{S}^{M-1})}
\end{equation}
for sufficiently large $M$.

To this end, pick $M$ large enough so that $\frac{C^2}M<1$ and take $\theta\in(0,\frac{\pi}{2})$ such that $\cos^2\theta=\frac{C^2}{M}$.
Then $B(x)$ is the union of both polar caps of angular radius $\theta$ centered at $\pm x$.
Using hyperspherical coordinates, we find that
\begin{equation}
\label{eq.bicap area}
\mathrm{Area}(B(x))
=2~\mathrm{Area}(\mathbb{S}^{M-2})\int_0^\theta\sin^{M-2}\varphi\,\mathrm{d}\varphi.
\end{equation}
Next, we can substitute $t=\cos\varphi$ to get
\begin{equation}
\label{eq.bicap area 2}
\int_0^\theta\sin^{M-2}\varphi\,\mathrm{d}\varphi
=\int_0^\theta\sin^{M-3}\varphi\sin\varphi\,\mathrm{d}\varphi
=\int_{\cos\theta}^1(1-t^2)^{\frac{M-3}{2}}\,\mathrm{d}t.
\end{equation}
Note that the area of $\mathbb{S}^{M-1}$ is given by replacing $\theta$ with $\frac{\pi}{2}$ in $\eqref{eq.bicap area}$ and \eqref{eq.bicap area 2}, and so
\begin{equation*}
\frac{\mathrm{Area}(B(x))}{\mathrm{Area}(\mathbb{S}^{M-1})}
=\frac{\int_{\cos\theta}^1(1-t^2)^{\frac{M-3}{2}}\,\mathrm{d}t}{\int_0^1(1-t^2)^{\frac{M-3}{2}}\,\mathrm{d}t}.
\end{equation*}
Substituting $u=t\sqrt{M-3}$ and recalling that $\cos^2\theta=\frac{C^2}{M}$ results in new integrals which converge as $M$ grows large:
\begin{equation*}
\frac{\mathrm{Area}(B(x))}{\mathrm{Area}(\mathbb{S}^{M-1})}
=\frac{\displaystyle\int_{C\sqrt{\frac{M-3}M}}^{\sqrt{M-3}}\Big(1-\frac{2}{M-3}\frac{u^2}2\Big)^{\frac{M-3}{2}}\,\mathrm{d}u}{\displaystyle\int_{0}^{\sqrt{M-3}}\Big(1-\frac{2}{M-3}\frac{u^2}2\Big)^{\frac{M-3}{2}}\,\mathrm{d}u}.
\end{equation*}
Specifically, since $(1+\frac xn)^n$ converges from below to $\mathrm{e}^x$ for all $x\geq0$, we can apply the Lebesgue dominated convergence theorem to the Gaussian to obtain
\begin{equation*}
\frac{\mathrm{Area}(B(x))}{\mathrm{Area}(\mathbb{S}^{M-1})}
\longrightarrow\frac{\int_{C}^\infty \mathrm{e}^{-u^2/2}du}{\int_0^\infty \mathrm{e}^{-u^2/2}du}
=2Q(C).
\end{equation*}
This implies that as $M$ grows large, our assumption~\eqref{eq.liminf} guarantees \eqref{eq.bicap area 0}, as needed.
\end{proof}

As a corollary to Theorem~\ref{thm.fundamental limits}, note that if $p_M\rightarrow1$ as $M$ gets large, then the worst-case condition number diverges to infinity.
Specifically, this establishes that $M\times N$ full spark frames with $M=\mathrm{o}(N)$ cannot be ``maximally robust to erasures'' in a numerical sense; for sufficiently large $M$, the adversary can delete $N-M$ columns of the frame in a way that leaves an arbitrarily ill-conditioned square submatrix.
This highlights the value of a theory of numerically erasure-robust frames.

\section{Implications and remaining problems}

Having constructed several numerically erasure-robust frames, and having further proved certain fundamental limits, we conclude with a few interesting observations.
First, we consider an implication for maximal ETFs: no $M\times N$ $(p,C)$-NERF can have $(1-p)N$ zeros in a common row, since otherwise the adversary can delete the other $pN$ columns and leave a rank-deficient submatrix.
Since Theorem~\ref{thm.etf nerf} also applies to maximal ETFs, this implies that there is no basis over which half of a maximal ETF's vectors share a common zero coordinate.
That is, if maximal ETFs exist, then they cannot be too sparse in any basis.

Due to their computational benefits, frames which have a sparse representation have recently become a subject of active research~\cite{CalderbankCHKP:11,CasazzaHKK:11}.
In this vein, one attractive feature of Steiner ETFs is their naturally sparse representation; in fact, the proportion of nonzero entries in an $M\times N$ Steiner ETF is $\mathrm{O}(M^{-1/2})$~\cite{FickusMT:10}.
However, no Steiner ETF can be maximal, for they have at most $N=\mathrm{O}(M^{3/2})$.
The work presented here reinforces this fact: since no $M\times N$ $(p,C)$-NERF can be very sparse, and since ETFs with $N=\Omega(M^2)$ are NERFs by Theorem~\ref{thm.asymp max etf nerf}, we see that neither Steiner ETFs---nor any generalization of the Steiner construction with similar levels of sparsity---will ever be able to produce ETFs in which $N=\Omega(M^2)$.

Recall that $M\times N$ full spark frames have the defining property that every subcollection of $M$ columns spans; trivially, this implies that every subcollection of size \emph{at least} $M$ also spans.
By analogy, it is natural to ask whether a $(p,C)$-NERF is also a $(p',C)$-NERF for every $p'\in[0,p)$.
However, it is not clear whether this is the case, since deleting columns does not necessarily worsen a frame's conditioning.
As an example, the union of an orthonormal basis with some unit vector is not as well conditioned as the orthonormal basis which survives the deletion of the last vector.
While this open question is interesting, it is inconsequential in practice:
If the adversary deletes less than $pN$ of the frame vectors, we can neglect more of them to guarantee a well-conditioned subframe.

Another remark: Reviewing the results of this paper, we know there exist NERFs with $p<\frac{1}{2}$ by Theorem~\ref{thm.etf nerf}.
Meanwhile, Theorem~\ref{thm.fundamental limits} states that for any fixed $C$, there do not exist NERFs with values of $p$ that grow arbitrarily close to $1$.
Various questions remain: Do there exist NERFs with $p\in[\frac{1}{2},1)$?
If so, what is the largest $p$ for which $(p,C)$-NERFs exist?
Interestingly, this ``one-half barrier'' appears to be more than a mere artifact of Theorem~\ref{thm.etf nerf}.
To be clear, every matrix $F$ whose entries are $\pm1$'s cannot be a NERF with $p\geq\frac{1}{2}$; for any two rows of $F$, the corresponding entries are either equal or opposite, and so the adversary can delete the columns corresponding to the less popular relationship and leave a rank-deficient matrix.
Moreover, random matrix methods~\cite{BaraniukDDW:08,RudelsonV:08} apply to matrices of $\pm1$ entries without loss of effectiveness, and so breaking the one-half barrier, if it is even possible, will likely require other methods.


\begin{thebibliography}{WW}

\bibitem{AlexeevCM:arxiv11}
B.~Alexeev, J.~Cahill, D.G.~Mixon,
Full spark frames,
submitted, Available online: arXiv:1110.3548

\bibitem{Alltop:80}
W.O.~Alltop,
Complex sequences with low periodic correlations,
IEEE Trans.~Inform.~Theory, 26 (1980) 350--354.

\bibitem{Appleby:05}
D.M.~Appleby,
Symmetric informationally complete-positive operator valued measures and the extended Clifford group,
J.~Math.~Phys.~46 (2005) 052107/1--29.

\bibitem{BalanBCE:09}
B.~Balan, B.G.~Bodmann, P.G.~Casazza and D.~Edidin,
Painless Reconstruction from Magnitudes of Frame Vectors,
J. Fourier Anal. Appl. 15 (2009) 488--501.

\bibitem{BandeiraFMW:arxiv12}
A.S.~Bandeira, M.~Fickus, D.G.~Mixon, P.~Wong,
The road to deterministic matrices with the restricted isometry property,
submitted, Available online: arXiv:1202.1234

\bibitem{BaraniukDDW:08}
R.~Baraniuk, M.~Davenport, R.~DeVore, M.~Wakin,
A simple proof of the restricted isometry property for random matrics,
Constr.~Approx.~28 (2008) 253--263.

\bibitem{BenedettoF:03}
J.J.~Benedetto, M.~Fickus,
Finite normalized tight frames,
Adv.~Comput.~Math.~18 (2003) 357--385.

\bibitem{CalderbankCHKP:11}
R.~Calderbank, P.G.~Casazza, A.~Heinecke, G.~Kutyniok, A.~Pezeshki,
Sparse fusion frames: Existence and construction,
Adv.~Comput.~Math.~35 (2011) 1--31.

\bibitem{CasazzaF:06}
P.G.~Casazza, M.~Fickus,
Fourier transforms of finite chirps,
EURASIP J.~Appl.~Signal~Process.~2006 (2006) 70204/1--7.

\bibitem{CasazzaHKK:11}
P.G.~Casazza, A.~Heinecke, F.~Krahmer, G.~Kutyniok,
Optimally sparse frames,
IEEE Trans.~Inform.~Theory 57 (2011) 7279--7287.

\bibitem{CasazzaK:03}
P.G.~Casazza, J.~Kova\v{c}evi\'c,
Equal-norm tight frames with erasures,
Adv.~Comput.~Math.~18 (2003) 387--430.

\bibitem{Candes:08}
E.J.~Cand\`{e}s,
The restricted isometry property and its implications for compressed sensing,
C.~R.~Acad.~Sci.~Paris, Ser.~I 346 (2008) 589--592.

\bibitem{CandesT:05}
E.J.~Cand\`{e}s and T.~Tao,
Decoding by linear programming,
IEEE~Trans.~Inform.~Theory 44 (2005) 4203--4215.

\bibitem{DavidsonS:01}
K.R.~Davidson and S.J.~Szarek,
Local operator theory, random matrices and Banach spaces,
In: Handbook in Banach Spaces Vol I, ed. W.B.~Johnson, J.~Lindenstrauss,
Elsevier (2001), 317--366.

\bibitem{FickusMT:10}
M.~Fickus, D.G.~Mixon and J.C.~Tremain,
Steiner equiangular tight frames,
Linear Algebra Appl.~436 (2012) 1014--1027.

\bibitem{GoyalKK:01}
V.K.~Goyal, J.~Kova\v{c}evi\'c, J.A.~Kelner,
Quantized frame expansions with erasures,
Appl.~Comp.~Harmon.~Anal.\ 10 (2001) 203--233.

\bibitem{HolmesP:laa04}
R.B.~Holmes, V.I.~Paulsen,
Optimal frames for erasures,
Linear Algebra Appl.~377 (2004) 31--51.

\bibitem{JungnickelPS:07}
D.~Jungnickel, A.~Pott, K.W.~Smith,
Difference sets,
In: C.J.~Colbourn, J.H.~Dinitz (Eds.), Handbook of Combinatorial Designs (2007) 419--435.

\bibitem{PlanatRP:06}
M.~Planat, H.C.~Rosu, S.~Perrine,
A survey of finite algebraic geometrical structures underlying mutually unbiased quantum measurements,
Found.~Phys.~36 (2006) 1662--1680.

\bibitem{PuschelK:dcc05}
M.~P\"{u}schel, J.~Kova\v{c}evi\'{c},
Real, tight frames with maximal robustness to erasures,
Proc.~Data Compr.~Conf.~(2005) 63--72.

\bibitem{RudelsonV:08}
M.~Rudelson, R.~Vershynin,
On sparse reconstruction from Fourier and Gaussian measurements,
Comm.~Pure Appl.~Math.~61 (2008) 1025--1045.

\bibitem{StrohmerH:03}
T.~Strohmer and R.W.~Heath,
Grassmannian frames with applications to coding and communication,
Appl.~Comp.~Harmon.~Anal.\ 14 (2003) 257--275.

\bibitem{ValeW:05}
R.~Vale, S.~Waldron,
Tight frames and their symmetries,
Constr.~Approx.~21 (2005) 83--112.

\bibitem{Waldron:09}
S.~Waldron,
On the construction of equiangular frames from graphs,
Linear Algebra Appl.~431 (2009) 2228--2242.

\bibitem{Welch:74}
L.R.~Welch,
Lower bounds on the maximum cross correlation of signals,
IEEE Trans.~Inform.~Theory 20 (1974) 397--399.

\bibitem{XiaZG:05}
P.~Xia, S.~Zhou and G.B~Giannakis,
Achieving the Welch bound with difference sets,
IEEE Trans.~Inform.~Theory 51 (2005) 1900--1907.

\end{thebibliography}
\end{document}